			\newtheorem{thm}{Theorem}[]
			\newtheorem{lem}[thm]{Lemma}
\newcommand{\amrand}[2]
  {\leavevmode
   \marginpar
     [\raggedleft\scriptsize \leavevmode\normalcolor #1: #2]
     {\raggedright\scriptsize \leavevmode\normalcolor #1: #2}
  }
\newcommand{\scc}{\to}
\newcommand{\wcc}{\rightsquigarrow}
\definecolor{Gray}{gray}{.85}
\definecolor{Blue}{RGB}{60,20,255}
\definecolor{Red}{RGB}{255,20,60}
\newcommand{\cctt}[0]{{\sf CC/TT}}
\newcommand{\dftt}[0]{{\sf DF/TT}}
\newcommand{\qcctt}[0]{{\sf QCC/TT}}
\newcommand{\half}{\nicefrac{1}{2}}  
\newcommand{\cmark}{\ding{51}}%
\newcommand{\xmark}{\ding{55}}%
\newcommand{\bim}{\supset\!\subset}
\title{Gibbardian Collapse and Trivalent Conditionals
} 
\author{Paul \'Egr\'e\footnote{Institut Jean-Nicod (CNRS/ENS/EHESS), D\'epartement de philosophie \& D\'epartement d'\'etudes cognitives, Ecole normale sup\'erieure, PSL University, 29 rue d'Ulm, 75005, Paris, France. ORCID: 0000-0002-9114-7686. Email: paul.egre@ens.fr} \and Lorenzo Rossi\footnote{Munich Center for Mathematical Philosophy (MCMP), Fakult\"{a}t f\"{u}r Philosophie, Wissenschaftstheorie 
und Religionswissenschaft, Ludwig-Maximilians-Universit\"{a}t M\"{u}nchen, Geschwister-Scholl-Platz 1, D-80539 München. ORCID: 0000-0002-1932-5484. Email: lorenzo.rossi@lrz.uni-muenchen.de} \and Jan Sprenger\footnote{Center for Logic, Language and Cognition (LLC), Department of Philosophy and Educational Science, Universit\`a degli Studi di Torino, Via Sant'Ottavio 20, 10124 Torino, Italy. ORCID: 0000-0003-0083-9685. Email: jan.sprenger@unito.it}}
\date{}
\begin{document}

\maketitle

\begin{abstract}
\noindent This paper discusses the scope and significance of the so-called triviality result stated by Allan Gibbard for indicative conditionals,
showing that if a conditional operator satisfies the Law of
Import-Export, is supraclassical, and is stronger than the material
conditional, then it must collapse to the material conditional.
Gibbard's result is taken to pose a dilemma for a truth-functional
account of indicative conditionals: give up Import-Export, or embrace the
two-valued analysis. We show that this dilemma can be averted in
trivalent logics of the conditional based on Reichenbach and de
Finetti's idea that a conditional with a false antecedent is
undefined. Import-Export and truth-functionality hold without
triviality in such logics. We unravel some implicit assumptions in
Gibbard's proof, and discuss a recent generalization of Gibbard's
result due to Branden Fitelson.\end{abstract}



\textbf{Keywords:} indicative conditional; material conditional; logics of conditionals; trivalent logic; Gibbardian collapse; Import-Export

\section{Introduction}

The Law of Import-Export 
denotes the principle that a right-nested conditional of the form $A\to (B \to C)$ is logically equivalent to the simple conditional $(A\wedge B) \to C$ where both antecedents are united by conjunction. The Law holds in classical logic for material implication, and if there is a 
logic for the indicative conditional of ordinary language, it appears Import-Export ought to be a part of it. For instance, to use an example from \citep[300]{cooper1968propositional}, the sentences ``If Smith attends and Jones attends then a quorum will be present'', and ``if Smith attends, then if Jones attends, a quorum will be present'' appear to convey the same hypothetical information. The same appears to hold more generally, at least when $A$, $B$ and $C$ themselves are non-conditional sentences, and the equivalence has been described as ``a fact of English usage'' \citep{mcgee1989conditional}.\footnote{Import-Export has been challenged on linguistic grounds, see for instance \cite{khoo2019triviality}, drawing on examples from Fitelson. The alleged counterexamples are subtle, however, and even Khoo and Mandelkern accept a version of the law. See also Appendix \ref{sec:IE}.}  


In a celebrated paper, however, Allan \citet{gibbard1980two} showed that a binary conditional connective `$\to$' collapses to the material conditional of classical logic `$\supset$' if the following conditions hold: (i) the conditional connective satisfies Import-Export, (ii) it is at least as strong as the material conditional 
($A\to C \models_{L} A\supset C$), where $\models_{L}$ is the consequence relation of the target logic of conditionals, (iii) it is supraclassical in the sense that it reproduces the valid inferences of classical logic in conditional form 
($\models_{L} A\to C$ whenever $A\models_{\sf CL} C$). From (i)--(iii) and some natural background assumptions, Gibbard infers $A\supset C \models_{L} A\to C$. Given (ii), $\to$ and $\supset$ are thus logically equivalent, 
according to the logic of conditionals ($\models_{L}$) under consideration. Prima facie, the conditional then needs to support all inference schemes validated by the material conditional in classical logic. However, inferences such as $\neg A \models A \to C$ (one of the paradoxes of material implication) enjoy little plausibility in ordinary reasoning with conditionals. 

Gibbard's result poses a challenge for theories that compete with material implication as an adequate analysis of the indicative conditional.\footnote{Notable defenders of the material implication analysis are \citet{Lewis1976}, \citet{Jackson1979} and \citet{Grice1989}.} For example, Stalnaker's logic {\sf C2} \citep{stalnaker1968} and Lewis's logic {\sf VC} \citep{lewis1973} are both supraclassical and make the conditional stronger than the material conditional, but they invalidate Import-Export for that matter. 

Not all theories make that choice, however. All of the above logics operate in a \textit{bivalent} logical setting, 
thus limiting their options. In this paper, we explore how certain \textit{trivalent} logics of conditionals address Gibbard's challenge. These logics, which retain truth-functionality, analyze an indicative conditional of the form ``if $A$ then $C$'' as a \textit{conditional assertion} that is \emph{void} if the antecedent turns out to be false, and that takes the truth value of the consequent $C$ if $A$ is true \citep{reichenbach1935wahr,definetti1936logique,quine1950methods,belnap1970conditional}. This analysis assigns a third truth value (``neither true nor false'') to such ``void'' assertions, and gives rise to various logics that combine a truth-functional conditional connective with existing frameworks for trivalent logics \citep[e.g.,][]{cooper1968propositional,farrell1979material, milne1997bruno,cantwell2008logic,baratgin2013uncertainty,EgreRossiSprenger2020a,EgreRossiSprenger2020b}. 

This chapter clarifies the scope and significance of Gibbardian collapse results with specific attention to such trivalent logics, in which the conditional is undefined when its antecedent is false. We begin with a precise explication of Gibbard's result, including a more formal version of his original proof sketch (Section \ref{sec:Gibbard}). Then we present two trivalent logics of indicative conditionals, paired with Strong Kleene semantics for conjunction and negation, and we examine how they deal with  Gibbardian collapse (Section \ref{sec:Tri} and \ref{sec:TriSK}). We then turn to trivalent logics that replace Strong Kleene operators with \citeauthor{cooper1968propositional}'s quasi-connectives where the conjunction of the True and the third truth value is the True (Section \ref{sec:TriQ}). Specifically, we show why rejecting superclassicality---and retaining both Import-Export and a stronger-than-material conditional---is a viable way of avoiding Gibbardian collapse. 

In the second part of the paper, we consider a recent strengthening of Gibbard's result 
due to Branden Fitelson and apply it to the above trivalent logics (Section \ref{sec:FitRes}, \ref{sec:FitInt} and \ref{sec:Block}). From this analysis it emerges that Gibbard's result may be better described as a \textit{uniqueness result}: we cannot have two conditional connectives that satisfy Import-Export as well as Conjunction Elimination, where one is strictly stronger than the other, and where the weaker (already) satisfies Modus Ponens. 
We also provide three appendices: 
Appendix \ref{sec:IE}  rebuts a recent attempt at a \textit{reductio} of Import-Export, Appendix \ref{sec:technical} provides the proofs of various lemmata stated in the paper, and Appendix \ref{sec:khoo} gives a more constrained derivation of Gibbardian collapse than his original proof, of particular relevance for the first trivalent system we discuss. For more in-depth treatment of trivalent logics of conditionals, we refer the reader to our comprehensive survey and analysis in \citet{EgreRossiSprenger2020a,EgreRossiSprenger2020b}.

\section{Gibbard's Collapse Result}\label{sec:Gibbard}





The Law of Import-Export is an important bridge between different types of conditionals: it permits to transform right-nested conditionals into simple ones. Import-Export is of specific interest in  suppositional accounts of indicative conditionals that assess the assertability of a conditional by the corresponding conditional probability \citep[as per Adams' thesis, viz.][]{Adams1965}. Import-Export is then an indispensable tool for providing a probabilistic analysis of embedded conditionals. 
However, when Adams' Thesis, originally limited to conditionals with Boolean antecedent and consequent, is extended to nested conditionals, Import-Export creates unexpected problems.\footnote{The unrestricted version of Adams's equation is often called Stalnaker's Thesis \citep[going back to][]{Stalnaker1970} or simply ``The Equation'', with the latter name being prevalent in the psychological literature. Adams defends it in his \citeyear{Adams1975} monograph, too.} For example, a famous result by David \citet{lewis1976probabilities} shows that combining this latter equation with the usual laws of probability and an unrestricted application of Import-Export trivializes the probability of the indicative conditional.\footnote{On the reasons to defend Import-Export in relation to probabilities of conditionals, see \citet{mcgee1989conditional} and \cite{arlo2001bayesian}. A discussion of the links between Gibbardian collapse and Lewisian triviality lies beyond the scope of this paper, but we refer to \cite{lassiter2019trivalent} for a survey of Lewisian triviality results and their treatment in a trivalent framework.} Gibbard establishes a second difficulty with Import-Export, namely that any conditional satisfying Import-Export in combination with other intuitive principles collapses to the material conditional.

Gibbard's original proof \citep[][234--235]{gibbard1980two}---in reality more of an outline---was based on semantic considerations and left various assumptions implicit. Here we provide a more formal derivation. In particular, Gibbard only stressed conditions (i)--(iii) below, but implicitly assumed two further conditions, here highlighted as (iv) and (v), as well as structural constraints on the underlying consequence relation. In what follows we use $\models_{\sf CL}$ for classical consequence, and $\equiv_L$ for the conjunction of $\models_L$ and its converse. Under (v) we mean that $\supset$ obeys a classical law whenever it obeys a classical inference or a classical metainference.\footnote{An inference is a relation between (sets of) formulae: for instance the relation between $(A\supset B)\wedge A$ and $A\wedge B$ ; a metainference is a relation between inferences, for example the relation between $A\models B$ and $\models A\supset B$.}



\begin{thm}[Gibbard] Suppose $L$ is a logic whose consequence relation $\models_{L}$ is at least transitive, with $\supset$ and $\rightarrow$ two binary operators, obeying principles (i)-(v) for every formulae $A, B, C$. Then $\rightarrow$ and $\supset$ are provably equivalent in $L$.\\ 

\begin{tabular}{lll}
(i) & $A \rightarrow (B \rightarrow C)\equiv_{L} (A \wedge B) \rightarrow C$ & Import-Export \\
(ii) & $A \rightarrow B\models_{L} (A \supset B)$ & Stronger-than-Material\\
(iii) & If $A\models_{\sf CL} B$, then $\models_{L} A \rightarrow B$ & Supraclassicality\\
(iv) & If $A\equiv_{L} A'$ then $A\rightarrow B\equiv_{L} A'\rightarrow B$ & Left Logical Equivalence \\
(v) & $\supset$ obeys classical laws in $L$ & Classicality of $\supset$ 
\end{tabular}\label{T:Gibbard}
\end{thm}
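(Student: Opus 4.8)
The plan is to establish the one direction that (ii) does not already give us, namely $A \supset C \models_{L} A \rightarrow C$; combined with (ii) and transitivity of $\models_{L}$ this yields $A \rightarrow C \equiv_{L} A \supset C$ for arbitrary $A, C$, which is the claimed equivalence. The engine of the whole argument is a single well-chosen instance of Import-Export, applied to the nested conditional $(A \supset C) \rightarrow (A \rightarrow C)$.

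First I would invoke Import-Export (i) with outer antecedent $A \supset C$, inner antecedent $A$, and consequent $C$ to obtain
$$(A \supset C) \rightarrow (A \rightarrow C) \equiv_{L} \big((A \supset C) \wedge A\big) \rightarrow C.$$
Next, since $(A \supset C) \wedge A$ and $A \wedge C$ are classically equivalent, Classicality of $\supset$ (v) lifts this to $(A \supset C) \wedge A \equiv_{L} A \wedge C$, and Left Logical Equivalence (iv) then lets me substitute inside the scope of $\rightarrow$, giving $\big((A \supset C) \wedge A\big) \rightarrow C \equiv_{L} (A \wedge C) \rightarrow C$. Because $A \wedge C \models_{\sf CL} C$, Supraclassicality (iii) yields $\models_{L} (A \wedge C) \rightarrow C$. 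Chaining these equivalences back through the Import-Export step delivers the theorem $\models_{L} (A \supset C) \rightarrow (A \rightarrow C)$.

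The remaining work is to discharge the outer conditional. Applying Stronger-than-Material (ii) to $(A\supset C) \rightarrow (A \rightarrow C)$ gives $(A \supset C) \rightarrow (A \rightarrow C) \models_{L} (A \supset C) \supset (A \rightarrow C)$, so by transitivity $\models_{L} (A \supset C) \supset (A \rightarrow C)$. Finally I would use Classicality of $\supset$ (v) in its metainferential guise: the passage from $\models X \supset Y$ to $X \models Y$ is a valid classical metainference (detachment for the material conditional), hence valid in $L$; instantiating with $X = A \supset C$ and $Y = A \rightarrow C$ gives $A \supset C \models_{L} A \rightarrow C$, as desired.

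I expect the delicate point to be the bookkeeping of exactly which assumption licenses each rewriting. In particular, (v) is needed twice and for two genuinely different purposes: once as an object-level law to equate $(A\supset C)\wedge A$ with $A \wedge C$, and once as a metainference to detach the final material conditional. Moreover (iv) is indispensable for substituting a classically-equivalent formula in \emph{antecedent} position inside $\rightarrow$, where the bare equivalence $\equiv_{L}$ would not transfer without a congruence principle. Verifying that transitivity of $\models_{L}$ is the only structural rule actually invoked, and that no further hidden assumption about the good behaviour of $\rightarrow$ beyond (i)--(iv) creeps in, is the part that most repays care.
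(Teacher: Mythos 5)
Your proposal is correct and follows essentially the same route as the paper's own proof: the same key instance of Import-Export applied to $(A\supset C)\rightarrow(A\rightarrow C)$, the same use of (v) for the absorption equivalence and (iv) for substitution in antecedent position, Conjunction Elimination plus (iii) to obtain the theoremhood of $(A\wedge C)\rightarrow C$, then (ii) and the metainferential use of (v) to detach. Your closing remarks about the double role of (v) and the indispensability of (iv) match the paper's own commentary on its proof.
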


\begin{proof}\

\begin{tabular}{lll}

1. & $(A\supset B) \rightarrow (A \rightarrow
B)\equiv_{L} ((A\supset B) \wedge A)\rightarrow B$ & by (i)\\

2. &  $((A\supset B) \wedge A)\equiv_{L} (A \wedge B)$ & by (v) (classical inferences)\\

3. & $(A\supset B) \wedge A)\rightarrow B\equiv_{L} (A\wedge B)
\rightarrow B$ & by 2 and (iv)\\

4. & $ (A\wedge B) \rightarrow B \equiv_{L} (A\supset B) \rightarrow (A \rightarrow
B)$ & 1, 3 and the transitivity of $\models_L$ \\

5 & $A \wedge B\models_{\sf CL} B$ &  Conjunction Elimination \\ 

6. & $\models_L (A\wedge B) \rightarrow B$ &  5 and (iii) \\

7. & $\models_L (A\supset B) \rightarrow (A \rightarrow B)$ & 4, 6, and the transitivity of $\models_L$ \\

8. & $(A\supset B) \rightarrow (A \rightarrow B)\models_L
(A\supset B) \supset (A \rightarrow B)$ & by (ii)  \\

9. & $\models_L (A\supset B) \supset (A \rightarrow B)$ & 7, 8 and the transitivity of $\models_L$\\

%
%
%

10. & $A\supset B \models_L  A \rightarrow B$ & by 9 and (v) (classical metainference)

\end{tabular}
\end{proof}
This is not the only proof of Gibbard's result. In particular \citet{Fitelson2013} and \citet{khoo2019triviality} give more parsimonious derivations. But it closely matches the structure of his original argument: first Gibbard shows that $(A\supset B) \rightarrow (A \rightarrow B)$ is a theorem of $L$ (step 1--7), from that he derives $\models_L (A\supset B) \supset (A \rightarrow B)$ (step 8--9) and finally, he infers $A\supset B \models_L  A \rightarrow B$ (step 10). 

With Gibbard we can grant that the assumptions (ii) and (iii) introduced alongside Import-Export are fairly weak. Stronger-than-Material is shared by all theories that classify an indicative conditional with true antecedent and false consequent as false.\footnote{The name MP is sometimes used for this principle, see \citet{unterhuber2014completeness}, or \citet{khoo2019triviality} who call it Modus Ponens.
We find more appropriate to use `Stronger-than-Material' since Modus Ponens is strictly speaking a two-premise argument form. The two principles are not necessarily equivalent: in the system {\sf DF/TT} for instance, Stronger-than-Material holds but not Modus Ponens (in the form $A\to B, A\models B$). 
} Supraclassicality, a restricted version of the principle of Conditional Introduction, means that deductive relations are supported by the corresponding conditional. Even that could be weakened by just assuming the conditional to support conjunction elimination as in step 6. In section \ref{sec:FitRes} we discuss more general conditions for Gibbardian collapse proposed by Branden \citet{Fitelson2013}. 

Assumptions (iv) and (v), on the other hand, are stronger than meets the eye. While the substitution rule LLE was taken for granted by Gibbard, likely on grounds of compositionality, it raises issues in relation to counterpossibles and other forms of hyperintensionality \citep[see][]{nute1980topics,fine2012counterfactuals}. However, even if one is inclined to give up principle (iv), one may not find fault with applying it in this particular case. Similarly, (v) implies that the material conditional supports classical absorption laws (step 2 of the proof) and (meta-inferential) Modus Ponens (step 10) in $L$ --- two properties not necessarily retained in non-classical logics.

Gibbard's result also leaves a number of questions unanswered. One of them concerns the implication of the mutual entailment between $\rightarrow$ and $\supset$. Does the collapse imply that the two conditionals can be replaced by one another in all contexts, for example? The answer to this question is in fact negative, as we proceed to show using trivalent logic in the next section.

\section{The Trivalent Analysis of Indicative Conditionals}\label{sec:Tri}

From his result, Gibbard drew the lesson that if we want the indicative conditional to be a propositional function, and to account for a natural reading of embedded indicative conditionals, then the function must be `$\supset$', namely the bivalent material conditional. We disagree with this conclusion: trivalent truth-functional accounts of the conditional can satisfy Import-Export and yield a reasonable account of embeddings without collapsing to the material conditional. We now explain why one may want to adopt such an approach, and then, in the next two sections, how they deal with Gibbard's result. 

\begin{table}[htb]
\[
\begin{tabular}[t]{l|ccc}
$f_{\to_{\sf DF}}$ & 1 & $\nicefrac{1}{2}$ & 0\\
 \hline
 1 & 1 &  $\nicefrac{1}{2}$ & 0\\
$\nicefrac{1}{2}$ & $\nicefrac{1}{2}$ &  $\nicefrac{1}{2}$& $\nicefrac{1}{2}$\\
 0 & $\nicefrac{1}{2}$ &  $\nicefrac{1}{2}$& $\nicefrac{1}{2}$\\
 \end{tabular}
 \qquad
 \begin{tabular}[t]{l|ccc}
$f_{\to_{\sf CC}}$ & 1 & $\nicefrac{1}{2}$ & 0\\
 \hline
 1 & 1 &  $\nicefrac{1}{2}$ & 0\\
$\nicefrac{1}{2}$ & 1& $\nicefrac{1}{2}$& 0\\
 0 & $\nicefrac{1}{2}$ &  $\nicefrac{1}{2}$& $\nicefrac{1}{2}$\\
 \end{tabular}
 \]\caption{\footnotesize Truth tables for the de Finetti conditional (left) and the Cooper-Cantwell conditional (right).}\label{tab:fincoo}
 \end{table}


Reichenbach and de Finetti proposed to analyze an indicative conditional ``if $A$, then $C$'' as an assertion about $C$ upon the supposition that $A$ is true. Thus the conditional is true whenever $A$ and $C$ are true, and false whenever $A$ is true and $C$ is false. When the supposition (=the antecedent $A$) turns out to be false, there is no factual basis for evaluating the conditional statement, and therefore it is classified as neither true nor false. This basic idea gives rise to various truth tables for $A \to C$. Two of them are the table proposed by Bruno \citet{definetti1936logique} and the one proposed independently by William \citet{cooper1968propositional} and John \citet{cantwell2008logic} (see Table \ref{tab:fincoo}). In both of them the value $\nicefrac{1}{2}$ can be interpreted as ``neither true nor false'', ``void'', or ``indeterminate''. There is moreover a systematic correspondence and duality between those tables: whereas de Finetti treats ``not true'' antecedents ($<1$) in the same way as false antecedents ($=0$), Cooper and Cantwell treat ``not false'' antecedents ($>0$) in the same way as true ones ($=1$). Thus in de Finetti's table the second row copies the third, whereas in Cooper and Cantwell's table it copies the first.


\begin{table}[ht]
\[
\begin{tabular}{c|c}
& $f_{\neg}$\\
\hline
$1$ & $0$\\
$\nicefrac{1}{2}$ & $\nicefrac{1}{2}$\\
$0$ & $1$\\
\end{tabular}
\hspace{14pt}
\begin{tabular}{c|ccc}
$f_{\wedge}$ & $1$ & $\nicefrac{1}{2}$ & $0$\\
\hline
$1$ & $1$ & $\nicefrac{1}{2}$ & $0$\\
$\nicefrac{1}{2}$ & $\nicefrac{1}{2}$ & $\nicefrac{1}{2}$ & $0$\\
$0$ & $0$ & $0$ & $0$\\
\end{tabular}
\hspace{14pt}
\begin{tabular}{c|ccc}
$f_{\supset}$ & 1 & $\nicefrac{1}{2}$ & 0\\
 \hline
 1 & 1 &  $\nicefrac{1}{2}$ & 0\\
$\nicefrac{1}{2}$ & 1 &  $\nicefrac{1}{2}$& $\nicefrac{1}{2}$\\
 0 & 1 & 1 & 1\\
 \end{tabular}
\]
\caption{\footnotesize Strong Kleene truth tables for negation, conjunction, and the material conditional.}\label{tab:SK}
\end{table}

One way to define the other logical connectives is via the familiar Strong Kleene truth tables (see Table \ref{tab:SK}). Conjunction corresponds to the ``minimum'' of the two values, disjunction to the ``maximum'', and negation to inversion of the semantic value. In particular, beside the indicative conditional $A \to C$, the trivalent analysis also admits a Strong Kleene ``material'' conditional $A \supset C$, definable as $\neg (A \wedge \neg C)$ (see again Table \ref{tab:SK}). 

To make a logic, however, we also need a definition of validity. This question is non-trivial in a trivalent setting since preservation of (strict) truth is not the same as preservation of non-falsity. Like Cooper and Cantwell, and based on independent arguments,\footnote{All other consequence relations come with problematic features \citep[Fact 3.4 in][]{EgreRossiSprenger2020a}: they either fail the Law of Identity (i.e., $\not{\models} A \to A$), or they license the inference from a conditional to its converse (i.e., $A \to C \models C \to A$).} we opt for a tolerant-to-tolerant ({\sf TT-}) consequence relation where non-falsify is preserved: an inference $A \models C$ is valid if, for any evaluation function (of the appropriate kind) $v$ from the sentences of the language to the values $\{0, \nicefrac{1}{2}, 1\}$, whenever $v(A) \in \{\nicefrac{1}{2}, 1\}$, then also  $v(C) \in \{\nicefrac{1}{2}, 1\}$. This choice yields two logics depending on how the conditional is interpreted: the logic {\sf DF/TT} based on de Finetti's truth table, and the logic {\sf CC/TT} based on the Cooper-Cantwell table.\footnote{The system {\sf CC/TT} actually matches Cantwell's system. Cooper's logic, called {\sf OL} rests on a different choice of truth tables for conjunction and disjunction, and restricts valuations to two-valued atoms.} 

Both logics make different predictions, but they agree on a common core, and they give a smooth treatment of nested conditionals. In particular both {\sf DF/TT} and  {\sf CC/TT} satisfy the Law of Import-Export. We now investigate how they deal with Gibbardian collapse.  






\section{Gibbardian collapse in {\sf DF/TT} and {\sf CC/TT}
}\label{sec:TriSK}

\noindent We first consider Gibbard's triviality result in the context of {\sf DF/TT} with its indicative and material conditionals. {\sf DF/TT} is contractive, reflexive, monotonic and transitive. An inspection of the principles (i)--(v) in Theorem \ref{T:Gibbard} shows that:
\begin{itemize}
\itemsep=0pt
\item Assumption (i) holds. In particular, both sides of the Law of Import-Export receive the same truth value in any {\sf DF}-evaluation. 
\item Assumption (ii) also holds: if there is a {\sf DF}-evaluation $v$ such that $v(A \supset B) = 0$, then $v(A) = 1$ and $v(B) = 0$, but then $v(A \rightarrow B) = 0$ as well, thus failing to make $A \rightarrow B$ tolerantly true.

\item Assumption (iii) holds in {\sf DF/TT}. 
We prove this in Appendix \ref{sec:technical}.
\item Assumption (iv) \textit{fails} in {\sf DF/TT}. In fact, $A \models_{\sf DF/TT} B$ and $B \models_{\sf DF/TT} A$ if, for any {\sf DF}-evaluation $v$, one of the following is given: 
\begin{align*}
\text{(a) } \; & v(A) = 1 = v(B) & \text{(c) } \; & v(A) = 1; \; v(B) = \nicefrac{1}{2} \\
\text{(b) } \; & v(A) = \nicefrac{1}{2} = v(B) & \text{(d) } \; & v(A) = \nicefrac{1}{2}; \; v(B) = 1 
\end{align*}
Therefore, letting $v(C) = 0$, cases (c) and (d) provide counterexamples since either $A \rightarrow C \not\models_{\sf DF/TT} B \rightarrow C$ or $B \rightarrow C \not\models_{\sf DF/TT} A \rightarrow C$.
A concrete example is the following: 
\begin{align*}
p \vee \neg p &\, \models_{\sf DF/TT} (p \rightarrow \neg p) \vee (\neg p \rightarrow p)\\
(p \rightarrow \neg p) \vee (\neg p \rightarrow p) &\, \models_{\sf DF/TT} p \vee \neg p
\end{align*}
but 
\[[(p \rightarrow \neg p) \vee (\neg p \rightarrow p)] \rightarrow (p \wedge \neg p) \not\models_{\sf DF/TT} (p \vee \neg p) \rightarrow (p \wedge \neg p)\]
\item Assumption (v) \textit{fails} in general of $\supset$ in {\sf DF/TT}. In particular, step 2 of Gibbard's proof fails: $(A \supset B) \wedge A \not\models_{\sf DF/TT} A \wedge B$, assuming $v(A)=\half$ and $v(B)=0$.
\end{itemize}
The failure of Gibbard's conditions (iv) and (v) may seem to make {\sf DF/TT} irrelevant for the discussion of his result. But this is not so: despite assumptions (iv) and (v) failing for {\sf DF/TT}'s indicative conditional and material conditional, the two conditionals turn out to be equivalent. More precisely, {\sf DF/TT} validates the equivalence of $A \supset B$ and $A \rightarrow B$, as a reciprocal entailment ($\equiv_{\sf DT/TT}$), as a material biconditional (denoted by $\bim$), and as an indicative biconditional (denoted by $\leftrightarrow$).
\begin{lem}
For every $A, B \in {\sf For}(L)$: 
\begin{align*}
 A \supset B &\equiv_{\sf DF/TT}  A \rightarrow  B\\
&\models_{\sf DF/TT} (A \supset B)  \bim  (A \rightarrow B)\\
&\models_{\sf DF/TT} (A \supset B)  \leftrightarrow  (A \rightarrow B)
\end{align*}
\label{L:DF/TTcollapse}
\end{lem}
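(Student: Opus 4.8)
The plan is to exploit the fact that {\sf TT}-consequence tracks only non-falsity: $X \models_{\sf DF/TT} Y$ holds precisely when no {\sf DF}-evaluation $v$ has $v(Y) = 0$ together with $v(X) \neq 0$, i.e.\ when every $v$ with $v(Y) = 0$ also has $v(X) = 0$. So reciprocal entailment $X \equiv_{\sf DF/TT} Y$ amounts to $X$ and $Y$ having the \emph{same falsity set}, and validity of a formula amounts to its never taking the value $0$. I would therefore reduce all three claims to a single computation of where $A \supset B$ and $A \to B$ take the value $0$.

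First I would establish the key observation: for every {\sf DF}-evaluation $v$,
\[
v(A \supset B) = 0 \iff v(A \to B) = 0 \iff \bigl(v(A) = 1 \text{ and } v(B) = 0\bigr).
\]
For the Strong Kleene material conditional this is read off Table \ref{tab:SK} (or from $A \supset B = \neg(A \wedge \neg B)$): the only entry equal to $0$ is the one with $v(A) = 1$, $v(B) = 0$. For the de Finetti conditional it is immediate from its table, since $A \to B$ copies $v(B)$ when $v(A) = 1$ and equals $\half$ otherwise, so again its unique $0$-entry sits at $v(A)=1$, $v(B) = 0$. Hence $A \supset B$ and $A \to B$ have one and the same falsity set.

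The first claim, $A \supset B \equiv_{\sf DF/TT} A \to B$, is then immediate: identical falsity sets give entailment in both directions. For the two biconditionals I would use the shared feature of \emph{both} conditional connectives appearing in them --- the Strong Kleene $\supset$ and the de Finetti $\to$ alike return $0$ only when their antecedent is $1$ and their consequent $0$. Writing $X = A \supset B$ and $Y = A \to B$, each directional conjunct ($X \supset Y$, $Y \supset X$ for $\bim$; $X \to Y$, $Y \to X$ for $\leftrightarrow$) can thus be $0$ only if one of $X, Y$ is $1$ while the other is $0$. But the common falsity set forbids exactly this: $v(X) = 0 \iff v(Y) = 0$, so if $v(X)=0$ then $v(Y)=0 \neq 1$, and symmetrically. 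Each conjunct is therefore always non-false; since $\wedge$ is the minimum, both $(A\supset B)\bim(A\to B)$ and $(A\supset B)\leftrightarrow(A\to B)$ are non-false under every $v$, i.e.\ tolerantly valid.

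There is no deep obstacle here --- the argument is a finite truth-table verification --- so the real work is bookkeeping. The point to get right is the threefold distinction between reciprocal {\sf TT}-entailment, validity of the material biconditional $\bim$, and validity of the indicative biconditional $\leftrightarrow$, together with using the correct connective ($\supset$ versus $\to$) in each and translating {\sf TT}-validity as ``never takes the value $0$''. The unifying insight that makes all three fall out at once is that the two conditionals share their falsity set and that both connectives are false only under a true antecedent and a false consequent.
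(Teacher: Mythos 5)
Your proof is correct. The single computation it rests on---that under every {\sf DF}-evaluation both $A \supset B$ (read as $\neg(A \wedge \neg B)$ with Strong Kleene connectives) and $A \to_{\sf DF} B$ take the value $0$ exactly when $v(A)=1$ and $v(B)=0$---is accurate, and your reduction of {\sf TT}-validity to ``never takes value $0$'' and of reciprocal {\sf TT}-entailment to ``identical falsity sets'' matches the definition of tolerant-to-tolerant consequence given in Section \ref{sec:Tri}. The route, however, is genuinely different from the paper's. The paper establishes the two entailments syntactically, by exhibiting derivations in the three-sided sequent calculus for \dftt\ of Appendix \ref{sec:technical} (Lemma \ref{lemma:seq}), and then appeals to soundness and completeness to transfer the result to the semantics, treating the remaining biconditional claims as immediate; it also notes that one direction can be recovered from the Khoo--Mandelkern version of Gibbard's proof in Appendix \ref{sec:khoo}. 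Your semantic argument is more elementary and self-contained, and it has the virtue of deriving all three equivalences uniformly from one observation about shared falsity conditions---which also makes transparent \emph{why} the collapse in \dftt\ is, as the paper puts it, a byproduct of the {\sf DF} table agreeing with the material conditional on value $0$ and of {\sf TT}-consequence conflating $1$ with $\half$. What the paper's proof buys instead is an illustration of how to reason inside the trivalent sequent calculus, and the structural observation that the derivation of $A \supset B \vdash_{\sf DF/TT} A \to B$ mirrors Reasoning by Cases. One small point of care: you implicitly read $\bim$ and $\leftrightarrow$ as Strong Kleene conjunctions of the two directional conditionals ($\supset$ in each direction, respectively $\to_{\sf DF}$ in each direction); that is the intended reading, and your argument that each conjunct, and hence their minimum, is never $0$ goes through.
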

\noindent This result in not a coincidence. As it turns out, Gibbard's result can be derived only using principles (i), (ii), (iii), (v) and structural assumptions on logical consequence, in such a way that all uses of (v) are {\sf DF/TT} sound. This result directly follows from the version of Gibbard's result established by \citet{khoo2019triviality}, as we prove in Appendix \ref{sec:khoo}. We also give a sequent-style proof of the collapse in Appendix \ref{sec:technical}, making use of the system presented in our \citet{EgreRossiSprenger2020b}.

However, such an extended form of equivalence between the indicative and the material conditional in {\sf DF/TT} does not mean that the two conditionals are identified with each other or indistinguishable. In fact, they obey very different logical principles, such as the following connexive law: 
\[A \to B \models_{\sf DF/TT} \neg (A \to \neg B) \hspace{.5cm} \mbox{ but } \hspace{.5cm} \neg A \vee B \not\models_{\sf DF/TT} \neg (\neg A \vee \neg B).\]
This shows that indicative and  material conditional cannot be validly replaced in complex formulae in {\sf DF/TT}. Put differently, {\sf DF/TT} fails the classical 
principle of replacement of equivalents. 

What is, then, the import of {\sf DF/TT}'s equivalences between different conditionals? Not much, one might argue. A look at the {\sf DF} semantics and the status of the premises of Gibbard's Theorem in {\sf DF/TT} shows that such equivalences are largely a byproduct of (i) the fact that the {\sf DF} truth table assigns value $0$ to an indicative conditional in the same cases in which it assigns value $0$ to a material conditional, and (ii) 
the fact that the tolerant-tolerant consequence relation does not distinguish between value $1$ and $\nicefrac{1}{2}$. 

Notably, things are different when we move to {\sf CC/TT}, keeping the tolerant-tolerant notion of consequence fixed, but moving to a truth-table for the conditional which assigns value $0$ to the indicative conditional in more cases. Like {\sf DF/TT}, {\sf CC/TT} is contractive, reflexive, monotonic and transitive. Moreover:
\begin{itemize}
\itemsep=0pt
\item Assumption (i) and (ii) hold in {\sf CC/TT} for the same reasons 
as {\sf DF/TT}.
\item Assumption (iii) \textit{fails} in {\sf CC/TT}. For example, $A \wedge \neg A \models_{\sf CL} B$, but $\not\models_{\sf CC/TT} (A \wedge \neg A) \rightarrow B$. A {\sf CC}-evaluation $v$ s.t. $v(A) = \nicefrac{1}{2}$ and $v(B) = 0$ provides a counterexample.
\item Assumption (iv) holds in {\sf CC/TT}. As in the {\sf DF/TT} case, we have that $A \models_{\sf CC/TT} B$ and $B \models_{\sf CC/TT} A$ if, for any {\sf CC}-evaluation $v$, one of the following is given: 
\begin{align*}
\text{(a) } \; & v(A) = 1 = v(B) & \text{(c) } \; & v(A) = 1; \; v(B) = \nicefrac{1}{2} \\
\text{(b) } \; & v(A) = \nicefrac{1}{2} = v(B) & \text{(d) } \; & v(A) = \nicefrac{1}{2}; \; v(B) = 1 
\end{align*}
However, the row of value $1$ is identical to the row of value $\nicefrac{1}{2}$ in {\sf CC}-truth tables of the indicative conditional. Therefore, whenever one of (a)--(d) holds, for every formula $C$, we have that $v(A \rightarrow C) = v(B \rightarrow C)$, proving the claim. 
\item Assumption (v) \textit{fails} in {\sf CC/TT}, for the same reason it fails in {\sf DF/TT}.
 \end{itemize}
One of (i)--(iv) thus fails for {\sf CC/TT} as it does for {\sf DF/TT}, and (v) fails in both. The failure of assumption (iii), supraclassicality, is irrelevant for blocking the proof since the only classically valid inference required for the proof is Conjunction Elimination 
($A \wedge B \models A$). This inference is also validated by {\sf CC/TT}. The proof is thus blocked exclusively by the failure of assumption (v): $\supset$ does not behave classically in {\sf CC/TT} (i.e., step 2 in our reconstruction of Gibbard's proof fails). 
Unlike \dftt, \cctt avoids Gibbardian collapse: it declares both conditionals  materially equivalent, but neither logically equivalent nor equivalent according to the indicative biconditional:

\begin{lem}
For every $A, B \in {\sf For}(L)$:
\begin{align*}
A \to B & \models_{\sf CC/TT} A \supset B \hspace{5pt} \text{ but } \hspace{5pt} A \supset B \not\models_{\sf CC/TT} A \rightarrow B \\
& \models_{\sf CC/TT} (A \supset B) \bim (A \rightarrow B) \\
&\not\models_{\sf CC/TT} (A \supset B) \leftrightarrow (A \rightarrow B)
\end{align*}
\label{L:CC/TTcollapse}
\end{lem}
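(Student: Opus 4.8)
The plan is to settle all three lines by direct inspection of the relevant truth tables---the Cooper-Cantwell table for $\to$ and the Strong Kleene tables for $\neg$, $\wedge$, $\supset$---together with the {\sf TT}-consequence relation, under which $\Gamma \models C$ fails exactly when some evaluation makes every premise non-false ($\in \{\half, 1\}$) while making $C$ false. The single structural fact driving everything is the row-by-row comparison of $v(A \to B)$ and $v(A \supset B)$: the two connectives coincide when $v(A) = 1$, are both non-false when $v(A) = 0$, and diverge only at $v(A) = \half$, $v(B) = 0$, where $v(A \to B) = 0$ but $v(A \supset B) = \half$. Crucially, they are never assigned opposite classical values (one $1$, the other $0$) at the same point.

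For the first two lines I would argue as follows. The entailment $A \to B \models_{\sf CC/TT} A \supset B$ is just Stronger-than-Material, i.e. assumption (ii), which has already been verified to hold in {\sf CC/TT}. For the material biconditional, write $X := A \supset B$ and $Y := A \to B$ and recall that $X \bim Y$ abbreviates $(X \supset Y) \wedge (Y \supset X)$, evaluated with the Strong Kleene tables. Since $\wedge$ is the minimum and the Strong Kleene $\supset$ takes value $0$ only on the pair $(1,0)$, we have $v(X \bim Y) = 0$ iff $(v(X), v(Y)) \in \{(1,0),(0,1)\}$. Both possibilities are excluded. For $(v(X),v(Y)) = (1,0)$ we would need $v(A \to B) = 0$, i.e. $v(A) \in \{\half, 1\}$ and $v(B) = 0$, but then $v(A \supset B) \in \{0, \half\}$, never $1$. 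For $(v(X),v(Y)) = (0,1)$ we would need $v(A \supset B) = 0$, i.e. $v(A) = 1$ and $v(B) = 0$, but then $v(A \to B) = 0$, not $1$. Hence $(A \supset B) \bim (A \to B)$ is never false, so it is {\sf TT}-valid.

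For the failure direction and the third line I would exhibit the single divergent point as a common counterexample. Take the evaluation with $v(A) = \half$ and $v(B) = 0$. Then $v(A \to B) = 0$ while $v(A \supset B) = \half$, so the premise $A \supset B$ is non-false and the conclusion $A \to B$ is false; this refutes $A \supset B \models_{\sf CC/TT} A \to B$. The same evaluation handles the indicative biconditional: $X \leftrightarrow Y$ abbreviates $(X \to Y) \wedge (Y \to X)$, evaluated with the Cooper-Cantwell conditional, and here $v(X) = \half$, $v(Y) = 0$ give $v(X \to Y) = 0$ (antecedent non-false, consequent false), hence $v((A \supset B) \leftrightarrow (A \to B)) = 0$, so the indicative biconditional is not {\sf TT}-valid.

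I expect no genuine obstacle; the only point requiring care is to keep the two biconditionals distinct and to track which truth table governs each. The conceptual heart of the contrast between the valid $\bim$ and the invalid $\leftrightarrow$ is precisely the asymmetry at $v(A) = \half$, $v(B) = 0$: the material biconditional routes this case through the Strong Kleene $\supset$, whose $(\half, 0)$ entry is $\half$, whereas the indicative biconditional routes it through the Cooper-Cantwell $\to$, whose $(\half, 0)$ entry is $0$---which is exactly why replacing $\supset$ by $\to$ turns a valid equivalence into an invalid one.
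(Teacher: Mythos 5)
Your proof is correct: the entailment direction, the validity of the material biconditional, and the two failures all check out against the Cooper--Cantwell and Strong Kleene tables, and your single countermodel $v(A)=\half$, $v(B)=0$ is exactly the divergence point that matters. The paper leaves the proof of this lemma essentially implicit (it spells out only a sequent-calculus derivation for the \dftt\ analogue and notes that similar arguments are available for \cctt), and your direct truth-table verification is precisely the semantic argument it has in mind, including the key observation that $A \supset B$ and $A \to B$ never take opposite classical values, which is what makes $\bim$ valid while $\leftrightarrow$ fails.
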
 

In general, the indicative conditional of {\sf CC/TT} is \emph{strictly stronger} than its material counterpart: $A \rightarrow B$ entails $A \supset B$, but is not entailed by it. And this is, by the light of a logic of  indicatives, a welcome result: the paradoxes of material implication consist, for the most part, of conditional statements that are clearly unacceptable, but are declared valid by the material conditional analysis. The Cooper-Cantwell analysis validates \emph{fewer} conditional principles (`fewer' in the sense of inclusion), and avoids the most problematic paradoxes. 

Altogether, {\sf DF/TT} and {\sf CC/TT} avert Gibbardian triviality in different ways. In both of them the material conditional is not fully classical, but an extensional collapse takes place in 
{\sf DF/TT} anyway; this, however, does not make the material conditional always replaceable by the indicative in {\sf DF/TT}. On the other hand, the indicative conditional of {\sf CC/TT} is more remote from its material counterpart: not only does it validate different conditional principles (removing the most pressing paradox of material implication), it is also  extensionally distinct from the material conditional within {\sf CC/TT} itself. 

Summing up, while Gibbardian collapse is avoided more markedly in {\sf CC/TT} than in {\sf DF/TT}, in neither logic does it constitute a form of ``triviality'': even when indicative and material conditionals are declared to be equivalent, they are firmly set apart by  their inferential behavior. 
This concludes our study of Gibbard's original collapse result in trivalent logics based on Strong Kleene connectives. In the next section, we expand the scope of our analysis and look at trivalent logics of conditionals with a different semantics for the standard logical connectives.

\section{Gibbardian Collapse in {\sf QCC/TT}
}\label{sec:TriQ}

The logics {\sf DF/TT} and {\sf CC/TT} solve a large set of problems related to the indicative conditional, but they also have important limitations.  First, both \cctt\ and \dftt\ validate the  Linearity principle $(A\to B) \vee (B\to A)$ for arbitrary $A$ and $B$. This schema was famously criticized by \citet{maccoll1908if}: neither of ``if John is red-haired, then John is a doctor'' and ``if John is a doctor, then he is red-haired'' seems acceptable in ordinary reasoning. So it is unclear on which basis we should accept, or declare as true, the disjunction of both sentences. Imagine, for example, that John is a black-haired doctor or a red-haired carpenter.  

In a similar vein, some highly plausible conjunctive sentences can never be true on \dftt\ or \cctt. The schema $(A \to A) \wedge (\neg A \to \neg A)$ (``if A, then A; and if $\neg$A, then $\neg$A'') is always classified as neither true nor false, although each of the conjuncts is a \dftt{}- and \cctt{}-theorem.\footnote{We are indebted to Paolo Santorio for this example.} Likewise, an ensemble of conditional predictions of the form $(A \to B) \wedge (\neg A \to C)$ will always be indeterminate or false \citep[][368--370]{Bradley2002}. However, a sentence such as: 
\begin{exe}
\ex If the sun shines tomorrow, Paul will go to the office by bike; and if it rains, he will take the metro. \label{ex:beach}
\end{exe}
seems to be true (with hindsight) if the sun shines tomorrow and Paul goes to the office by bike.

\begin{table}[ht]
\[
\begin{tabular}{c|ccc}
$f'_{\wedge}$ & $1$ & $\nicefrac{1}{2}$ & $0$\\
\hline
$1$ & $1$ & $1$ & $0$\\
$\nicefrac{1}{2}$ & $1$ & $\nicefrac{1}{2}$ & $0$\\
$0$ & $0$ & $0$ & $0$\\
\end{tabular}
\hspace{10pt}
\begin{tabular}{c|ccc}
$f'_{\vee}$ & $1$ & $\nicefrac{1}{2}$ & $0$\\
\hline
$1$ & $1$ & $1$ & $1$\\
$\nicefrac{1}{2}$ & $1$ & $\nicefrac{1}{2}$ & $0$\\
$0$ & $1$ & $0$ & $0$\\
\end{tabular}
\hspace{10pt}
\begin{tabular}{c|ccc}
$f'_{\supset}$ & $1$ & $\nicefrac{1}{2}$ & $0$\\
\hline
$1$ & $1$ & $0$ & $0$\\
$\nicefrac{1}{2}$ & $1$ & $\nicefrac{1}{2}$ & $0$\\
$0$ & $1$ & $1$ & $1$\\
\end{tabular}
\]
\caption{\footnotesize Truth tables for trivalent quasi-conjunction and quasi-disjunction and the material conditional based on quasi-disjunction, as advocated by \citet{cooper1968propositional}.}\label{tab:quasi}
\end{table}

A principled reply to these challenges consists in modifying the truth tables for trivalent conjunction and disjunction, as proposed by \citet{cooper1968propositional} 
(see also \citealt{dubois1994conditional} and \citealt{calabrese2002deduction}). In these truth tables, reproduced in  Table \ref{tab:quasi}, the conjunction of 
value $1$ and 
value $\nicefrac{1}{2}$
is 
value $1$, and vice versa for disjunction. This is coherent with the idea that a conditional assertion with two components (e.g., in Bradley's examples) should be classified as true if one of the assertions came out true, and the other one void. Notably, the material conditional $A \supset C$ (definable as $\neg A\vee B$ or as $\neg (A\wedge \neg B)$) of a {\sf TT-}logic based on these quasi-connectives blocks the paradoxes of material implication ($\neg A \not{\models} A \supset C$, $C \not{\models} A \supset C$), in line with the failure to validate Disjunction Introduction. 

 Adopting ``quasi-conjunction'' and ``quasi-disjunction'' \citep[the terminology is due to][]{Adams1975} invalidates Linearity and gives non-trivial truth conditions for ensembles or partitions of conditional assertions. In particular, $(A \to A) \wedge (\neg A \to \neg A)$ is always true, and so is $(A \to B) \wedge (\neg A \to C)$ when one of its conjuncts is true. We call the resulting logics  {\sf QDF/TT} and {\sf QCC/TT}.\footnote{{\sf QCC/TT} is almost identical to Cooper's logic {\sf OL}, except that Cooper requires valuations to be bivalent on atomic formulae.}  However, when paired with \dftt, quasi-conjunction leads to a violation of Import-Export, but not so in \cctt. So the system of interest for us in this section is {\sf QCC/TT}.


How does  {\sf QCC/TT} then fare with respect to the five premises of Gibbard's proof?
\begin{itemize}
\itemsep=0pt
\item Assumption (i) holds since both sides of the Law of Import-Export receive the same truth value in any {\sf QCC}-evaluation. 
\item Assumption (ii) \textit{fails} since the (quasi-)material conditional is strictly stronger than the indicative conditional. The valuation $v(A) = 1$ and $v(B) = \nicefrac{1}{2}$ is a model of $A \to B$, but not of $A \supset B$, which takes the same truth values as $\neg A \vee B$.
\item Assumption (iii) and (v) \textit{fail} with the same countermodels as in {\sf CC/TT}. 
\item Assumption (iv) holds: it is independent of the interpretation of the standard connectives and the proof for {\sf CC/TT} can be transferred. 
\end{itemize}
In {\sf QCC/TT}, two steps of Gibbard's proof are blocked, corresponding to the failure of  assumptions (ii) and (v). Like before, the failure of (iii) is inessential since the proof just requires Conjunction Elimination instead of the more general property of Supraclassicality.

\begin{table}[ht]
\[
\begin{tabular}{c|ccccccc|ccc}
Condition & (i) & (ii) & (iii) & CE & (iv) & (v) & TRM & $\equiv$? & $\leftrightarrow$? & $\bim$?  \\ \hline
\dftt\ & \cmark & \cmark & \cmark & \cmark & \xmark & \xmark & \cmark & \cmark & \cmark & \cmark\\
\cctt\ & \cmark & \cmark & \xmark & \cmark & \cmark & \xmark & \cmark & \xmark & \xmark & \cmark\\
\qcctt\ & \cmark & \xmark & \xmark & \cmark & \cmark & \xmark & \cmark & \xmark & \xmark & \xmark\\
\end{tabular}\]
\captionsetup{singlelinecheck=off}
\caption[]{\footnotesize Overview of which premises of Gibbard's proof are satisfied by the logics \dftt, \cctt and \qcctt. CE = conjunction elimination (=a sufficient surrogate for (iii)), TRM = transitivity, monotonocity and reflexivity of the logic. $\equiv, \leftrightarrow, \bim$ concern whether logical, indicative, or material equivalence holds between $\supset$ and $\to$.
} 
\end{table}

Since the material conditional is strictly stronger than the indicative in \qcctt, Gibbardian collapse does not happen, and moreover, neither the material nor the indicative conditional declares the two connectives equivalent:
\begin{lem}
For every $A, B \in {\sf For}(L)$: 
\begin{align*}
A \supset B & \models_{\sf QCC/TT} A \rightarrow B \hspace{5pt} \text{ but } \hspace{5pt} A \to B \not{\models}_{\sf QCC/TT} A \supset B \\
& \not{\models}_{\sf QCC/TT} (A \supset B) \bim (A \rightarrow B) & & \\
& \not{\models}_{\sf QCC/TT} (A \supset B) \leftrightarrow (A \rightarrow B) & & 
\end{align*}
\end{lem}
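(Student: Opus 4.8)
The plan is to verify each of the four assertions by direct inspection of the {\sf QCC} truth tables, exploiting the fact that the tolerant-to-tolerant consequence relation reduces everything to tracking the value $0$: an entailment $X \models_{\sf QCC/TT} Y$ holds iff no valuation makes $v(X) \in \{\half, 1\}$ while $v(Y) = 0$, and a schema $\models_{\sf QCC/TT} X$ holds iff no valuation makes $v(X) = 0$. Since all the connectives involved are truth-functional, each claim depends only on the values $v(A)$ and $v(B)$, so it suffices to run through the relevant cells of the tables rather than over all formulas.

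For the positive direction $A \supset B \models_{\sf QCC/TT} A \to B$ I would argue by contraposition. Reading off the Cooper--Cantwell table, $v(A \to B) = 0$ can occur only when $v(A) \in \{1, \half\}$ and $v(B) = 0$, since a false antecedent forces the value $\half$. In exactly these two cases the quasi-material conditional $A \supset B$ (i.e.\ $\neg A \vee B$) also takes value $0$: both its row $1$ and its row $\half$ send consequent $0$ to $0$ in $f'_\supset$. Hence whenever $A \to B$ is false so is $A \supset B$, which is precisely the required entailment.

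For the three remaining (negative) claims I expect a single valuation to do all the work, namely $v$ with $v(A) = 1$ and $v(B) = \half$. At this valuation $v(A \to B) = \half$ (row $1$ of the {\sf CC} table) while $v(A \supset B) = 0$ (row $1$, column $\half$ of $f'_\supset$), so $A \to B \not\models_{\sf QCC/TT} A \supset B$. Writing $P = A \supset B$ and $Q = A \to B$, I would then evaluate the two biconditionals at the same $v$. For the material biconditional $P \bim Q = (P \supset Q) \wedge (Q \supset P)$, the converse conjunct is $Q \supset P = \half \supset 0 = 0$; for the indicative biconditional $P \leftrightarrow Q = (P \to Q) \wedge (Q \to P)$, the converse conjunct is $Q \to P = \half \to 0 = 0$. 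In each case one conjunct equals $0$, and since quasi-conjunction satisfies $X \wedge 0 = 0$ for every $X$, both biconditionals evaluate to $0$, witnessing $\not\models_{\sf QCC/TT}(A \supset B)\bim(A \to B)$ and $\not\models_{\sf QCC/TT}(A \supset B)\leftrightarrow(A \to B)$.

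The only point that requires care is the composite evaluation of the two biconditionals, where one must keep straight which conditional and which connective is in play: $\bim$ is built from the quasi-disjunction-based $\supset$, whereas $\leftrightarrow$ is built from the Cooper--Cantwell $\to$, and the outer connective is the quasi-conjunction $f'_\wedge$ rather than a Strong Kleene one. The natural worry is that the nonstandard clause $1 \wedge \half = 1$ of quasi-conjunction might \emph{absorb} a defective conjunct and mask the intended failure. This worry is dispelled by observing that the offending conjunct here takes value $0$, not $\half$, and $0$ is absorbing for quasi-conjunction. Thus the computation is routine once the chosen valuation drives the converse direction to $0$, and the same valuation $v(A)=1,\,v(B)=\half$ simultaneously refutes the converse entailment and both biconditionals.
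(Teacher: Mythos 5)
Your verification is correct and follows exactly the route the paper takes (the lemma is left as a routine truth-table check there, with the key countermodel $v(A)=1$, $v(B)=\half$ already cited in the surrounding text as the witness for the failure of Stronger-than-Material in {\sf QCC/TT}). Both the contrapositive argument for the positive entailment and the reuse of that single valuation to refute the converse entailment and both biconditionals check out against the {\sf CC} and quasi-connective tables.
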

In \qcctt, the connectives are thus more distinct than in \dftt\ (where they are logically and materially equivalent) and \cctt\ (where they are not logically, but still materially equivalent). The way out provided by \qcctt\ is notable for another reason, too. Most theorists react to Gibbardian collapse either by giving up or restricting Import-Export (e.g., Stalnaker, Kratzer), or by endorsing a material implication analysis of the indicative conditional (e.g., Grice, Lewis, Jackson). Denying that $\supset$ satisfies the classical laws in a logic of conditionals---the road taken by \cctt---is already less common. However, 
Cooper's 
original approach is probably unique in entertaining the possibility of an indicative conditional that is strictly weaker than the material conditional. The explanation is probably that bivalent logic has been the default framework for formal work on conditionals and the material conditional represents, in that framework, the weakest possible conditional connective. 
The logic \qcctt\ thus shows an original and surprising way of defining the relationship between the two connectives.


\section{Fitelson's Generalized Collapse Result}\label{sec:FitRes}

\noindent Our rendition of Gibbard's original argument has revealed that one of the premises ---namely that $\models A \to C$ whenever $A$ classically implies $C$--- is stronger than needed: we only require that $(A \wedge C) \to C$ be a logical truth. On the other hand, Gibbard's argument uses some properties of classical logic and the material conditional, such as the fact that $A \wedge (A \supset C)$ is logically equivalent to $A \wedge C$. Gibbard's result can thus be generalized along two dimensions: first, use premises only as strong as we need them for the proof of the collapse result; second, make explicit the classicality assumptions (compare Section \ref{sec:Gibbard}) and extend the result to other logics than just classical logic with the material conditional. 

Branden \citet{Fitelson2013} has provided one such generalized result. 
It concerns the relation between two binary connectives represented by the symbols $\scc $ and $\wcc$ in an arbitrary logic $L$, whose consequence relation we denote with
$\models_L$.
Letting $A$, $B$ and $C$ stand for arbitrary formulae of $L$, 
and $\models_L$ for some consequence relation defined for the language of $L$, Fitelson states  eight conditions sufficient to derive a general collapse result:\footnote{Our notation swaps the meaning of the symbols $\scc$ and $\wcc$ in Fitelson's work to make it consistent with the rest of our paper.}
\begin{enumerate}
  \item[(1)] $\models_L (A \wedge B) \wcc  A$ \hfill (Conjunction Elimination for $\wcc $)
  \item[(2)] $\models_L (A \wedge B) \scc  A$ \hfill (Conjunction Elimination for $\scc $)
  \item[(3)] $\models_L A \wcc  (B \wcc  C)$ if and only if $\models_L (A \wedge B) \wcc  C$ \hfill (Import-Export for $\wcc $)
  \item[(4)] $\models_L A \scc  (B \scc  C)$ if and only if $\models_L (A \wedge B) \scc  C$ \hfill (Import-Export for $\scc $)
  \item[(5)] If $\models_L A \scc  B$, then $\models_L A \wcc  B$ \hfill ($\scc $ implies $\wcc $)
  \item[(6)] If $\models_L A \wcc  B$, then $A \models_L B$ \hfill (Conditional Elimination 
  for $\wcc $). 
  \item[(7)] If $A \equiv_L B$ and $\models_L A \scc  C$, then also $\models_L B \scc   C$\hfill  (Left Logical Equivalence) 
  \item[(8)] If $A \models_L B$ and $A \models_L C$, then $A \models_L B \wedge C$ \hfill (Conjunction Introduction)
\end{enumerate}
In short, Fitelson's result concerns the relationship between two conditionals which satisfy both Conjunction Elimination (1+2) and Import-Export (3+4), and of which one is stronger than the other one (5). The stronger conditional, represented by the normal arrow $\to$, is supposed to represent the indicative conditional. 
Moreover, it is assumed that the weaker connective $\wcc $ satisfies Conditional Elimination relative to the 
logic $\models_L$ (6), and that one can substitute 
$\models_L$-equivalents in the premises of 
$\scc $-validities (7).\footnote{What we call Conditional Elimination is the converse of Conditional Introduction. The two properties together are known as the Deduction Theorem. Conditional Elimination corresponds to (meta-inferential) Modus Ponens.
} Finally, it assumes Conjunction Introduction (8), a very natural property: if two propositions follow from a third, then so does their conjunction.


Fitelson shows that these axioms are logically independent from each other and that they are sufficient to show that the two connectives $\scc $ and $\wcc $ are logically equivalent: 
\begin{description}
  \item[Theorem (Fitelson 2013):] From conditions (1)--(8) it follows that 
  \begin{align*}
A \wcc  B &\models_L A \scc  B \qquad \text{and}  & A \scc  B &\models_L A \wcc  B
  \end{align*}
\end{description}
As Fitelson emphasizes, this should not be taken to imply that the connective $\scc $ collapses to the \textit{material} conditional, or that the indicative conditional ``If A, then C'' should be interpreted as ``not A or C''. Fitelson's result is interpretation-neutral and concerns \textit{any} two connectives with the said properties; specifically, it does not presuppose that the weaker connective $\wcc$ corresponds to the material conditional $\supset$. Whether the material conditional $A \supset C$ (i.e., $\neg A \vee C$) satisfies the properties of $\wcc $ (i.e., conditions (1), (3), (5) and (6)) will depend on 
which logic we choose to interpret $\models_L$, and we will soon see that it need not in a trivalent setting. What Fitelson shows is rather that if a conditional connective satisfies Conjunction Elimination, Import-Export and Modus Ponens, then in any logic with Conjunction Introduction, there cannot be a strictly stronger conditional connective that satisfies these conditions as well as axiom (7)---the substitution of equivalents in the premises of its theorems. In this sense, Fitelson proves the existence of an upper bound for the strength of a conditional that satisfies these intuitively desirable logical properties. Moreover Fitelson shows that such a connective must also validate some central intuitionistic principles. 

\section{Fitelson's Result and Trivalent Logic}\label{sec:FitInt}

What does Fitelson's result mean for trivalent logics when his two connectives $\scc$ and $\wcc$ are identified with the indicative and the material conditional? Keeping the tolerant-to-tolerant character of the logical consequence relation fixed (see Section \ref{sec:TriSK} for why), we have to assign values to the following parameters: 
\begin{itemize}
  \item the truth table for the indicative conditional (de Finetti or Cooper-Cantwell);
  \item the truth table for conjunction and disjunction (Strong Kleene operators or Cooper's quasi-conjunction and disjunction);
  \item which connective in Fitelson's result represents the indicative conditional, and which connective represents the material conditional.
\end{itemize}
This leaves us with eight different logics, characterized by the choice of the truth table for the indicative conditionals ({\sf DF} or {\sf CC}), the truth tables for conjunction and disjunction (Strong Kleene or Cooper), and the assignment of conditionals to Fitelson's connectives ($\scc $ and $\wcc $). Fitelson suggests that the stronger connective $\scc$ stands for the indicative conditional. However, the properties of $\wcc$, which include Modus Ponens, Conjunction Elimination and Import-Export, could also square well with the indicative conditional. Moreover, the indicative conditional can be \textit{weaker} than the material conditional in \qcctt. Thus, we have to carefully examine all ways of distributing Fitelson's connectives to truth tables. 

As noticed in the previous section, {\sf QDF/TT} does not satisfy Import-Export for the indicative conditional and so we set it aside (either condition (3) or condition (4) will fail). All the other logics satisfy conditions (1)--(4) and also condition (8). 
Thus our discussion will be limited to those logics and the more controversial properties (5), (6) and (7). Actually, we see that none of our trivalent logics satisfies all of these principles:
\begin{description}
  \item[DF/TT with $\scc =\rightarrow_{\sf DF}$] Satisfies (5)---material and indicative conditional are {\sf DF}-equivalent---, but neither (6) nor (7). For (6), consider $|A| = \nicefrac{1}{2}$, $|B| = 0$, and for (7), consider $|A| = \half$, $|B| = 1$, and $|C| = 0$. 
  \item[DF/TT with $\scc =\supset$] Satisfies (5), but neither (6) and (7). Consider the same examples as above. 
  \item[CC/TT with $\scc =\rightarrow_{\sf CC}$] Satisfies (5) and (7), but not (6). Consider again 
  $|A| = \nicefrac{1}{2}$ and $|B| = 0$.   
  \item[CC/TT with $\scc =\supset_Q$] Satisfies (6), but neither (5) nor (7). For (5), consider $|A| = \half$ and $|B| = 0$; for (7) consider $|A| = \half$, $|B| = 1$, and $|C| = 0$.  
\item[QCC/TT with $\scc =\rightarrow_{\sf CC}$] Satisfies (6) and (7), but not (5). Consider $|A| = 1$ and $|B| = \half$. 
\item[QCC/TT with $\scc =\supset_Q$] Satisfies (5) and (6), but not (7). The counterexample is $|A| = |C| = \half$ and $|B| = 1$. 
  \end{description}
  
\begin{table}[h!tb]
\centering
\renewcommand{\tabcolsep}{.25em}
\begin{tabular}[c]{p{0.3\textwidth}c||c|c||c|c||c|c}
\multicolumn{2}{c||}{Condition/Logic}   & \multicolumn{2}{c||}{\sf DF/TT} &  \multicolumn{2}{c||}{\sf CC/TT} & \multicolumn{2}{c}{\sf QCC/TT} \\ \hline \hline
Assignment of Symbols & Indicative=?   & $\scc $ & $\wcc $  & $\scc $ & $\wcc $  & $\scc $ & $\wcc $ \\ 
 & Material=?  & $\wcc $ & $\scc $ & $\wcc $ & $\scc $ & $\wcc $ & $\scc $ \\ \hline \hline
\multicolumn{2}{l||}{(5): $\scc $ implies $\wcc $} & \cmark & \cmark & \cmark & \xmark &  \xmark & \cmark \\ 
\multicolumn{2}{l||}{(6): Conditional Elimination for $\wcc $} & \xmark & \xmark & \xmark & \cmark &  \cmark & \cmark \\ 
\multicolumn{2}{l||}{(7): Substitution of Equivalents ($\scc $)}  &  \xmark & \xmark & \cmark & \xmark & \cmark & \xmark \\ \hline \hline
\multicolumn{2}{l||}{Collapse strongly blocked?} & \cmark & \cmark & \xmark & \xmark & \xmark & \xmark \\ 
\end{tabular} 
\caption{\footnotesize Overview of the satisfaction/violation of Fitelson's conditions (5)--(7) in different trivalent logics.} 
\label{tab:Fitelson} 
\end{table}  

Table \ref{tab:Fitelson} summarizes our findings. As we see, none of our trivalent candidate logics for the indicative conditional obeys all of these axioms. Since there are no obvious alternatives to the (various forms of the) material conditional as the second connective in Fitelson's theorem, Gibbardian collapse is blocked for the entire range of trivalent logics that we study. In particular, since at least one of the axioms fails for all configurations we have looked at, the connective $\scc$ must also fail one the principles of the intuitionistic conditional (this is, as mentioned above, a consequence of satisfying conditions (1)--(8)). 

\section{Blocking 
Fitelson's Collapse Strongly and Weakly}\label{sec:Block}

In order to better assess the distinct ways in which Fitelson's collapse is blocked in trivalent logics, we introduce a useful distinction. We say that a logic of indicative conditionals $L$ blocks the collapse \emph{strongly} if at least one of conditions (1)--(8) is not satisfied by letting $\scc  = \to_{\sf ind}$, where $\to_{\sf ind}$ is the connective that, in $L$, is taken to model the indicative conditional. We say that the $L$ blocks the collapse \emph{weakly} if $\scc  = \to_{\sf ind}$ \emph{and} $\wcc  = \supset$, where $\supset$ is the material conditional in $L$. In other words, $L$ blocks Fitelson's collapse strongly if some of Fitelson's premises fails in $L$ once $\scc $ is interpreted as $L$'s candidate for the indicative conditional, regardless of how the other conditional $\wcc $ is interpreted. On the other hand, $L$ blocks Fitelson's collapse only weakly if some of Fitelson's premises fails in $L$ once $\scc $ is interpreted as $L$'s candidate for the indicative conditional \emph{and} $\wcc $ is interpreted as $L$'s material conditional. In the former case, $L$'s indicative conditional is non-trivial (in the sense of the collapse) \emph{by itself}, whereas in the latter case it is non-trivial only if we assume (at least some of) the features of $\supset$ in $L$ for the other conditional. 

A glance at our findings shows that Fitelson's collapse result is blocked strongly for the {\sf DF/TT}-logics, and only weakly for all {\sf (Q)CC/TT}-logics. 
The failure of collapse in the {\sf (Q)CC/TT}-logics is due to both features of  the Cooper-Cantwell conditional in a {\sf TT}-consequence relation \emph{and} the choice of the material conditional as the interpretation of the weaker connective $\wcc$. Does this show that the indicative conditional of the {\sf (Q)CC/TT}-logics is ``trivial'', or in some sense uninteresting? Not really. All Fitelson's result can be used to argue for is that, given (1)--(8), the indicative conditional of {\sf (Q)CC/TT}-logics is $L$-equivalent to (i.e., inter-$L$-inferrable with) an unspecified conditional which: (i) cannot be the material conditional of $L$ (since {\sf (Q)CC/TT}-logics weakly block the collapse), and (ii) satisfies 
conditions (1), (3), (5), and (6), over a background logic which satisfies (8).\footnote{Respectively: Conjunction Elimination (1), Import-Export (3), being entailed by indicative conditionals (5), Modus Ponens (6), and Conjunction Introduction (8)} 
Now, not only are these properties unproblematic---by themselves, they do not give rise to any paradox of implication---, they are indeed desirable. Hence, it should actually be a welcome result that an indicative conditional is equivalent to a conditional with such properties.


In summary, since the trivalent logics we have examined block 
Fitelson's collapse result systematically, we do not find ourselves in the dilemma of having to sacrifice Import-Export, or another plausible condition to avoid triviality. To us, the most reasonable construal of Fitelson's theorem is as a \textit{uniqueness result}: it is impossible to have two conditional connectives both satisfying Import-Export and Conjunction Elimination, such that one is strictly stronger than the other and where the weaker one satisfies Conditional Elimination. This leaves  Left Logical Equivalence (condition (7)) out of the picture, but as Table \ref{tab:Fitelson} shows, this condition is only required to prevent collapse in one case, namely {\sf QCC/TT}, in which the material conditional is stronger than the indicative conditional. For all other combinations there is a tension between the relative strength of the connectives (as codified by (5)) and the fact that the weaker connective should satisfy Conditional Elimination (namely (6)).

\section{Conclusion}\label{sec:ccl}

This paper has given a precise reconstruction of Gibbard's informal argument that any indicative conditional that satisfies Import-Export and is supraclassical and stronger than the material conditional must collapse to the material conditional. Specifically, we have seen that Gibbard's argument requires additional premises (e.g., structural assumptions on the underlying logic $L$) and that the premises are not tight either (e.g., supraclassicality can be replaced without loss of validity by Conjunction Elimination). 

We have then explored how a family of trivalent logics, all based on the idea that a conditional is void when its antecedent turns out false, fare with respect to Gibbardian collapse. The logics we have examined all block an important premise of Gibbard's proof, namely the classical behavior of the material conditional $\supset$, as well as one additional premise (different for each logic). Nonetheless, in {\sf DF/TT}---the tolerant-to-tolerant logic based on de Finetti's truth table for the indicative conditional---Gibbardian collapse occurs, but this does not mean that both conditionals obey the same logical principles. In contrast, Cantwell's logic \cctt\ and Cooper's logic \qcctt, based on their common truth table for the indicative, avoid Gibbardian collapse altogether. This shows us that the apparent lesson from Gibbard's result--- that one has to give up Import-Export or endorse the 
material analysis of the conditional --- is mistaken. 

We confirmed that diagnosis by looking at these logics in the context of the strengthening of Gibbard's result proposed by \citet{Fitelson2013}. Specifically, we have re-interpreted Fitelson's result as showing the impossibility of having two distinct connectives that both satisfy a set of characteristic properties (Conjunction Elimination, Import-Export), and where the weaker one already satisfies Conditional Elimination. 
A logic of indicative conditionals does not have to choose between forswearing Import-Export and embracing the material conditional analysis: trivalent logics of conditionals offer a simple, yet articulate and fully truth-functional alternative that avoids both problems. 
To be sure, one might still have objections to Import-Export but, whatever they are, they cannot be supported by Gibbard-style collapse arguments. 



\bibliographystyle{../BibFolder/mychicago-ff}

\renewcommand{\bibname}{References}
\bibliography{../BibFolder/v-c(14)}

\begin{thebibliography}{}

\bibitem[\protect\citeauthoryear{Adams}{Adams}{1965}]{Adams1965}
Adams, Ernest~W. (1965).
\newblock {The Logic of Conditionals}.
\newblock {\em Inquiry\/}~{\em 8}, 166--197.

\bibitem[\protect\citeauthoryear{Adams}{Adams}{1975}]{Adams1975}
Adams, Ernest~W. (1975).
\newblock {\em The Logic of Conditionals}.
\newblock Dordrecht: Reidel.

\bibitem[\protect\citeauthoryear{Arl{\'o}-Costa}{Arl{\'o}-Costa}{2001}]{arlo2001bayesian}
Arl{\'o}-Costa, Horacio (2001).
\newblock Bayesian epistemology and epistemic conditionals: On the status of
  the export-import laws.
\newblock {\em The Journal of Philosophy\/}~{\em 98}, 555--593.

\bibitem[\protect\citeauthoryear{Baratgin, Over, and Politzer}{Baratgin
  et~al.}{2013}]{baratgin2013uncertainty}
Baratgin, Jean, David Over, and Guy Politzer (2013).
\newblock Uncertainty and the de {F}inetti tables.
\newblock {\em Thinking \& Reasoning\/}~{\em 19}, 308--328.

\bibitem[\protect\citeauthoryear{Belnap}{Belnap}{1970}]{belnap1970conditional}
Belnap, Nuel~D. (1970).
\newblock Conditional assertion and restricted quantification.
\newblock {\em No{\^u}s\/}, 1--12.

\bibitem[\protect\citeauthoryear{Bradley}{Bradley}{2002}]{Bradley2002}
Bradley, Richard (2002).
\newblock {Indicative Conditionals}.
\newblock {\em Erkenntnis\/}~{\em 56}, 345--378.

\bibitem[\protect\citeauthoryear{Calabrese}{Calabrese}{2002}]{calabrese2002deduction}
Calabrese, Philip (2002).
\newblock Deduction with uncertain conditionals.
\newblock {\em Information Sciences\/}~{\em 147}, 143--191.

\bibitem[\protect\citeauthoryear{Cantwell}{Cantwell}{2008}]{cantwell2008logic}
Cantwell, John (2008).
\newblock The logic of conditional negation.
\newblock {\em Notre Dame Journal of Formal Logic\/}~{\em 49}, 245--260.

\bibitem[\protect\citeauthoryear{Cooper}{Cooper}{1968}]{cooper1968propositional}
Cooper, William~S. (1968).
\newblock The propositional logic of ordinary discourse.
\newblock {\em Inquiry\/}~{\em 11}, 295--320.

\bibitem[\protect\citeauthoryear{de~Finetti}{de~Finetti}{1936}]{definetti1936logique}
de~Finetti, Bruno (1936).
\newblock La logique de la probabilit{\'e}.
\newblock In {\em Actes du congr{\`e}s international de philosophie
  scientifique}, Volume~4, Paris, pp.\  1--9. Hermann Editeurs.

\bibitem[\protect\citeauthoryear{Dubois and Prade}{Dubois and
  Prade}{1994}]{dubois1994conditional}
Dubois, Didier and Henri Prade (1994).
\newblock Conditional objects as nonmonotonic consequence relationships.
\newblock {\em IEEE Transactions on Systems, Man, and Cybernetics\/}~{\em 24},
  1724--1740.

\bibitem[\protect\citeauthoryear{\'Egr\'e, Rossi, and Sprenger}{\'Egr\'e
  et~al.}{2020a}]{EgreRossiSprenger2020a}
\'Egr\'e, Paul, Lorenzo Rossi, and Jan Sprenger (2020a).
\newblock {De Finettian Logics of Indicative Conditionals. Part I: Trivalent
  Semantics and Validity}.
\newblock {\em Journal of Philosophical Logic\/}.
\newblock In press.

\bibitem[\protect\citeauthoryear{\'Egr\'e, Rossi, and Sprenger}{\'Egr\'e
  et~al.}{2020b}]{EgreRossiSprenger2020b}
\'Egr\'e, Paul, Lorenzo Rossi, and Jan Sprenger (2020b).
\newblock {De Finettian Logics of Indicative Conditionals. Part II: Proof
  Theory and Algebraic Semantic}.
\newblock {\em Journal of Philosophical Logic\/}.
\newblock In press.

\bibitem[\protect\citeauthoryear{Farrell}{Farrell}{1979}]{farrell1979material}
Farrell, Robert~J. (1979).
\newblock Material implication, confirmation, and counterfactuals.
\newblock {\em Notre Dame Journal of Formal Logic\/}~{\em 20}, 383--394.

\bibitem[\protect\citeauthoryear{Fine}{Fine}{2012}]{fine2012counterfactuals}
Fine, Kit (2012).
\newblock Counterfactuals without possible worlds.
\newblock {\em The Journal of Philosophy\/}~{\em 109}, 221--246.

\bibitem[\protect\citeauthoryear{Fitelson}{Fitelson}{2013}]{Fitelson2013}
Fitelson, Branden (2013).
\newblock {Gibbard's Collapse Theorem for the Indicative Conditional: An
  Axiomatic Approach}.
\newblock In Maria~Paola Bonacina and Mark~E. Stickel (eds.), {\em Automated
  Reasoning and Mathematics: Essays in Memory of William W.~McCune}, pp.\
  181--188. Beriln: Springer.

\bibitem[\protect\citeauthoryear{Gibbard}{Gibbard}{1980}]{gibbard1980two}
Gibbard, Allan (1980).
\newblock Two recent theories of conditionals.
\newblock In William Harper (ed.), {\em Ifs}, pp.\  211--247. Springer.

\bibitem[\protect\citeauthoryear{Grice}{Grice}{1989}]{Grice1989}
Grice, H.P. (1989).
\newblock {\em Studies in the Way of Words}.
\newblock Cambridge/MA: Harvard University Press.

\bibitem[\protect\citeauthoryear{Jackson}{Jackson}{1979}]{Jackson1979}
Jackson, Frank (1979).
\newblock {On Assertion and Indicative Conditionals}.
\newblock {\em Philosophical Review\/}~{\em 88}, 565--589.

\bibitem[\protect\citeauthoryear{Khoo and Mandelkern}{Khoo and
  Mandelkern}{2019}]{khoo2019triviality}
Khoo, Justin and Matthew Mandelkern (2019).
\newblock Triviality results and the relationship between logical and natural
  languages.
\newblock {\em Mind\/}~{\em 128}, 485--526.

\bibitem[\protect\citeauthoryear{Lassiter}{Lassiter}{2019}]{lassiter2019trivalent}
Lassiter, Daniel (2019).
\newblock What we can learn from how trivalent conditionals avoid triviality.
\newblock {\em Inquiry\/}, 1--28.

\bibitem[\protect\citeauthoryear{Lewis}{Lewis}{1973}]{lewis1973}
Lewis, David (1973).
\newblock {\em Counterfactuals}.
\newblock Oxford: Basil Blackwell.

\bibitem[\protect\citeauthoryear{Lewis}{Lewis}{1976a}]{Lewis1976}
Lewis, David (1976a).
\newblock Probabilities of conditionals and conditional probabilities.
\newblock {\em Philosophical Review\/}~{\em 85}, 297--315.

\bibitem[\protect\citeauthoryear{Lewis}{Lewis}{1976b}]{lewis1976probabilities}
Lewis, David (1976b).
\newblock Probabilities of conditionals and conditional probabilities.
\newblock In W.~Harper (ed.), {\em Ifs}, pp.\  129--147. Springer.

\bibitem[\protect\citeauthoryear{MacColl}{MacColl}{1908}]{maccoll1908if}
MacColl, Hugh (1908).
\newblock {'If' and 'Imply'}.
\newblock {\em Mind\/}~{\em 17}, 453--455.

\bibitem[\protect\citeauthoryear{Mandelkern}{Mandelkern}{2019}]{Mandelkern2019a}
Mandelkern, Matthew (2019).
\newblock {Crises of Identity}.
\newblock In Julian~J. Schlöder, Dean McHugh, and Floris Roelofsen (eds.),
  {\em Proceedings of the 22nd Amsterdam Colloquium}, pp.\  279--288.

\bibitem[\protect\citeauthoryear{Mandelkern}{Mandelkern}{2020}]{Mandelkern2020a}
Mandelkern, Matthew (2020).
\newblock {Import-Export and `And' }.
\newblock {\em Philosophy and Phenomenological Research\/}~{\em 100}, 118--135.

\bibitem[\protect\citeauthoryear{McGee}{McGee}{1989}]{mcgee1989conditional}
McGee, Vann (1989).
\newblock Conditional probabilities and compounds of conditionals.
\newblock {\em The Philosophical Review\/}~{\em 98}, 485--541.

\bibitem[\protect\citeauthoryear{Milne}{Milne}{1997}]{milne1997bruno}
Milne, Peter (1997).
\newblock {Bruno de Finetti and the Logic of Conditional Events}.
\newblock {\em The British Journal for the Philosophy of Science\/}~{\em 48},
  195--232.

\bibitem[\protect\citeauthoryear{Nute}{Nute}{1980}]{nute1980topics}
Nute, Donald (1980).
\newblock {\em Topics in conditional logic}, Volume~20.
\newblock Springer Science \& Business Media.

\bibitem[\protect\citeauthoryear{Quine}{Quine}{1950}]{quine1950methods}
Quine, W. V.~O. (1950).
\newblock {\em Methods of Logic}.
\newblock Cambridge/MA: Harvard University Press.

\bibitem[\protect\citeauthoryear{Reichenbach}{Reichenbach}{1935}]{reichenbach1935wahr}
Reichenbach, Hans (1935).
\newblock {\em Wahrscheinlichkeitslehre}.
\newblock Leiden: Sijthoff.

\bibitem[\protect\citeauthoryear{Stalnaker}{Stalnaker}{1968}]{stalnaker1968}
Stalnaker, Robert (1968).
\newblock {A Theory of Conditionals}.
\newblock In {\em Studies in Logical Theory: American Philosophical Quarterly
  Monograph Series, No. 2}. Oxford: Blackwell.

\bibitem[\protect\citeauthoryear{Stalnaker}{Stalnaker}{1970}]{Stalnaker1970}
Stalnaker, Robert (1970).
\newblock {Probability and Conditionals}.
\newblock {\em Philosophy of Science\/}~{\em 37}, 64--80.

\bibitem[\protect\citeauthoryear{Unterhuber and Schurz}{Unterhuber and
  Schurz}{2014}]{unterhuber2014completeness}
Unterhuber, Matthias and Gerhard Schurz (2014).
\newblock {Completeness and correspondence in Chellas-Segerberg semantics}.
\newblock {\em Studia Logica\/}~{\em 102}, 891--911.

\end{thebibliography}

\appendix

\section{Import-Export Revisited}\label{sec:IE}

Our case study on trivalent logics shows that it is possible to have Import-Export without restriction in a conditional logic without running into undesirable results of collapse to the material conditional, or to other connectives that are clearly too weak. Specifically, even if a conditional connective $\to$ validates Import Export, the schema $(A \wedge B) \to A$, and is stronger than the material conditional, it need not be logically equivalent to the latter. 

This observation raises the suspicion that the scope of Gibbardian collapse results may have to do with the absence and presence of bivalence. Note that the third truth value has been essential to constructing suitable counterexamples to Fitelson's conditions (1)--(8), and to blocking a generalized collapse theorem. In other words, we conjecture that Gibbardian collapse is a characteristic feature of conditional connectives with Import-Export in \textit{bivalent logic}.

This conjecture shall now be probed by studying a recent reductio argument against Import-Export. Matthew \citet{Mandelkern2020a} argues that Import-Export, when conjoined with other plausible principles, leads to absurd conclusions \citep[compare also][]{Mandelkern2019a}. Specifically, for a logic 
$(L, \models_L)$ with formulae $A$, $B$ and $C$ and a connective $\to$ representing the indicative conditional, Mandelkern considers (and defends) the following three principles: 
\begin{align}
\text{If } & & A & \models_L B & \text{then} & & & \models_L A \to B \tag{Conditional Introduction}
\\
\text{If } & & & \models_L A \to B & \text{then} & &  A \to (B \to C) & \equiv_L A \to C \tag{Nothing Added}\\
%
\text{If } & & A \to C & \equiv_L B \to C & \text{then} & & A & \equiv_L B \tag{Equivalence}
\end{align}
where $\equiv_L$ means, as before, that both $\models_L$ and its converse hold. Conditional Introduction is valid in all trivalent logics we considered, whereas Nothing Added and Equivalence hold in {\sf (Q)CC/TT}, but not in {\sf (Q)DF/TT}.\footnote{Countermodel for Nothing Added in {\sf (Q)DF/TT}: $v(A) = 1$, $v(B) = \nicefrac{1}{2}$, $v(C) = 0$. Countermodel for Equivalence in  {\sf (Q)DF/TT}: $v(A) = v(C) = 0$, $v(B) = \nicefrac{1}{2}$.}
Mandelkern requires another premise, restricted to atom-classical formulae $A$ 
(i.e. such that all propositional variables have a classical value)
, but without restrictions on $B$: 
\begin{align}
\text{For atom-classical $A$:} & & \models_L (A \wedge \neg A) \to B \tag{Quodlibet}
\end{align}
Quodlibet too holds in the trivalent logics we surveyed. From these four principles Mandelkern derives the following intermediate result: 
\begin{align}
\text{For atom-classical $A$:} & & A \models_L \neg A \to B \tag{Intermediate}
\end{align}
Intermediate also holds in {\sf CC/TT}, and plausibly so: If $A$ holds then any conditional assertion with $\neg A$ as a premise is void, and thus valid in a logic with a tolerant-to-tolerant consequence relation. Intermediate is equivalent to $\neg A\models_L A\to B$, from which Mandelkern derives:
\begin{align}
\text{For atom-classical $A$: } & & \neg (A \to B) \models_L A \tag{Ex Falso}
\end{align}

\noindent The lesson Mandelkern takes from this is:
\begin{quotation}
\noindent [Intermediate] is clearly false [...]. For this conclusion entails that the falsity of $ \neg A \to B$ entails the falsity of $A$; more succinctly (given classical negation, which is not in dispute here), the falsity of $A \to B$ entails the truth of $A$. \citep[][symbolic notation changed]{Mandelkern2020a}
\end{quotation} 

\noindent Ex Falso is definitely an unacceptable principle for a theory of indicative conditionals. As it turns out, it is invalidated in the trivalent logics, including ${\sf CC/TT}$ (Consider $v(A) = 0$.) What happened in the step from Intermediate to Ex Falso? As hinted by Mandelkern's parenthetical remark, the step is blocked in ${\sf CC/TT}$ because trivalent negation is no longer classical. In particular, {\sf TT}-consequence does not obey Contraposition. This feature suggests a tradeoff: the trivalent logics of conditionals we considered validate Import-Export without restriction, and they do not fall prey to Mandelkern's reductio. However, they no longer validate Contraposition without restriction, and because ${\sf CC/TT}$ satisfies the full Deduction Theorem, the associated conditional fails contraposition too. For indicative as well as for counterfactuals, contraposition is moot, however, in that regard the way in which Mandelkern's reductio is blocked here does not appear problematic.\footnote{Mandelkern does not dispute the validity of Import-Export for simple right-nested conditionals where it looks very compelling; he just thinks that Import-Export has less than general scope. Specifically, he has doubts about the application of Import-Export to compound conditionals with left-nesting, such as $A \to ((B \to C) \to D)$. Naturally, it is very difficult to find reliable empirical data or expert intuitions on how such sentences are, or should be, interpreted.}

\section{Technical appendix}\label{sec:technical}
\noindent In this appendix, we first prove that assumption (iii) of Gibbard's Theorem holds in {\sf DF/TT}. Then, we give a syntactic proof of the mutual {\sf DF/TT}-entailments of $A \rightarrow B$ and $A \supset B$ (cf.~Lemma \ref{L:DF/TTcollapse}), in the three-sided sequent calculus for {\sf DF/TT} from \citep[][]{EgreRossiSprenger2020b}. The remaining claims of the Lemma are then immediate. The calculus is sound and complete for {\sf DF/TT}, so the proof immediately establishes the corresponding semantic claims, but we believe that a syntactic proof provides a good illustration of how one can, rather naturally, reason in trivalent logics. Similar proofs are available for the corresponding claims in {\sf CC/TT}.

\begin{lem} Supraclassicality holds in {\sf DF/TT}.\end{lem}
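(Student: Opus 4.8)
The plan is to turn the claim into a purely semantic statement about de Finetti evaluations and then bridge to classical valuations by an induction that tracks only the \emph{definite} (non-void) truth values. First I would note that, by the de Finetti truth table, $v(A \to B) = 0$ holds precisely when $v(A) = 1$ and $v(B) = 0$, and that a formula is a {\sf DF/TT}-theorem exactly when it never receives value $0$ (tolerant truth being non-falsity). Hence $\models_{\sf DF/TT} A \to B$ is equivalent to the statement that no {\sf DF}-evaluation $v$ satisfies $v(A) = 1$ and $v(B) = 0$. Reading $\models_{\sf CL}$ as classical consequence over the same language with $\to$ interpreted materially, it therefore suffices to prove the contrapositive: from a {\sf DF}-evaluation $v$ with $v(A) = 1$ and $v(B) = 0$ I would extract a single classical valuation witnessing $A \not\models_{\sf CL} B$.

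The bridge is a lemma proved by simultaneous induction on the complexity of $\phi$: for every {\sf DF}-evaluation $v$ and every classical valuation $c$ that \emph{sharpens} $v$ on atoms (i.e.\ $c(p) = v(p)$ whenever $v(p) \in \{0,1\}$, and $c(p)$ chosen arbitrarily when $v(p) = \half$), one has (a) if $v(\phi) = 1$ then $c(\phi) = 1$, and (b) if $v(\phi) = 0$ then $c(\phi) = 0$, where $c$ evaluates $\to$ as the material conditional. The atomic case is immediate; negation swaps clauses (a) and (b); conjunction and disjunction follow from the $\min$/$\max$ clauses. The decisive cases are the conditionals: the de Finetti table returns $1$ only when antecedent and consequent are both $1$, and returns $0$ only when the antecedent is $1$ and the consequent is $0$, so in each definite case the induction hypothesis forces the matching classical material value. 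The same check goes through for the Strong Kleene $\supset$, so the lemma covers the whole language.

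With the lemma in hand the conclusion is immediate: given $v$ with $v(A) = 1$ and $v(B) = 0$, choose any classical sharpening $c$ of $v$ (one exists, since the $\half$-atoms may be resolved arbitrarily); clause (a) yields $c(A) = 1$ and clause (b) yields $c(B) = 0$, contradicting $A \models_{\sf CL} B$. I expect the main obstacle to be precisely the conditional clauses of the induction, and more conceptually the fact that the de Finetti conditional is \emph{not} a conservative refinement of a classical connective: it returns $\half$ whenever its antecedent is false, so classical valuations cannot simply be regarded as a subclass of {\sf DF}-evaluations and the result cannot be read off directly. What makes the argument work is that the $1$- and $0$-entries of the de Finetti (and Strong Kleene) tables are contained in the corresponding $1$- and $0$-regions of the material conditional; confining attention to these definite values is exactly what lets the trivalent countermodel collapse onto a classical one.
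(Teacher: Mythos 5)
Your proof is correct and follows essentially the same route as the paper's: both argue by contraposition and show, by induction on formula complexity, that the definite ($0$/$1$) values of a {\sf DF}-evaluation are preserved by a suitable classical valuation, so a {\sf DF}-countermodel to $A \to B$ yields a classical countermodel to $A \models_{\sf CL} B$. The only (harmless) difference is that your sharpening lemma fixes the classical interpretation of $\to$ to be material, whereas the paper deliberately leaves open how conditional subformulae are classically interpreted; otherwise your lemma is just a cleaner packaging of the paper's induction.
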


\begin{proof} 
We prove the contrapositive. Suppose $\not\models_{\sf DF/TT} A \rightarrow B$. Then there is a {\sf DF}-evaluation $v : {\sf For}(L) \longmapsto \{0, \nicefrac{1}{2}, 1\}$ s.t. $v(A) = 1$ and $v(B) = 0$. We then claim that, in this case, then there is always a \emph{classical} evaluation $v_{\sf cl} : {\sf For}(L) \longmapsto \{0, 1\}$ s.t. for every $C \in {\sf For}(L)$, if $v(C) = 1$, then $v_{\sf cl}(C) = 1$ and if $v(C) = 0$, then $v_{\sf cl}(C) = 0$, thus showing that $A \not\models_{\sf CL} B$.
We prove this by induction on the logical complexity (${\sf cp}$) of $A$ and $B$: 
\begin{itemize}
\item ${\sf cp}(A) = {\sf cp}(B) = 0$. Then, $A \rightarrow B$ has the form $p \rightarrow q$, and $v(p) = 1$, $v(q) = 0$. $v_{\sf cl}$ is any classical evaluation which agrees with $v$ on $p$ and $q$, so clearly $p \not\models_{\sf CL} q$. 
\item ${\sf cp}(A) = m+1$ and ${\sf cp}(B) = 0$. Then $A \rightarrow B$ has the form $C \rightarrow q$, for $C$ a logically complex sentence. We  assume the claim as IH up to $m$, and reason by cases: 
\begin{itemize}
\item $C$ is $\neg D$. Then $v(\neg D) = 1$ and $v(q) = 0$, and $v(D) = 0$. By IH, then, there is a classical evaluation $v_{\sf cl}$ s.t. $v_{\sf cl}(D) = 0$ and $v(q) = 0$, so that $C\not\models_{\sf CL} q$. 
\item $C$ is $D \vee E$. Then $v(D \vee E) = 1$ and $v(q) = 0$. There are several cases, all similar between them, where at least one of the disjunct receives value $1$: 
\begin{itemize}
\item $v(D) = 1$ and $v(E) = 1$
\item $v(D) = 1$ and $v(E) = \nicefrac{1}{2}$
\item $v(D) = 1$ and $v(E) = 0$
\item $v(D) = \nicefrac{1}{2}$ and $v(E) = 1$
\item $v(D) = 0$ and $v(E) = 1$
\end{itemize}
Let $X$ be the (or `a') disjunct which receives value $1$ by $v$. By IH, $v_{\sf cl}(X) = 1$, and then $v_{\sf cl}(D \vee E) = 1$ and $v_{\sf cl}(q) = 0$, hence $C \not\models_{\sf CL} q$
\item The case where $C$ has the form $D \wedge E$ is similar to the above one. 
\item $C$ is $D \rightarrow E$. Then $v(D \rightarrow E) = 1$ and $v(q) = 0$, and therefore $v(D) = v(E) = 1$. By IH, then, $v_{\sf cl} (D) = v_{\sf cl} (E) = 1$, hence $C \not\models_{\sf CL} q$. 
\end{itemize}
\item The cases where ${\sf cp}(A) = 0$ and ${\sf cp}(B) = n+1$, and where ${\sf cp}(A) = m+1$ and ${\sf cp}(B) = n+1$ are dealt with similarly. 
\end{itemize}\end{proof}

Notice that, in this proof, a {\sf DF}-evaluation for the language including the conditional is mapped to a \emph{classical} evaluation for the same language, i.e. a classical evaluation which also interpret  formulae of the form $A \rightarrow B$. However, the proof does not specify how formulae of the form $A \rightarrow B$ are classically interpreted---that is, $A \rightarrow B$ may or may not be interpreted as a classical material conditional. We also note that an attempted proof along the lines of the above one would fail for {\sf CC/TT} exactly because the conditions under which an indicative conditional receives value $0$ under a {\sf CC}-evaluation strictly exceed the conditions under which a material  conditional receives receives value $0$ under a classical evaluation, unlike in a {\sf DF}-evaluation.

\begin{lem}\label{lemma:seq}
Let $\Gamma \vdash_{\sf DF/TT} \Delta$ indicate that there is a derivation of the three-sided sequent $\Gamma \,|\, \Delta \,|\, \Delta$ in the calculus developed in \citet{EgreRossiSprenger2020b}, \S\S 3.1-3.2. Then, for every $A, B \in {\sf For}(L)$:
\begin{center}
$A \supset B \vdash_{\sf DF/TT} A \rightarrow B$ \hspace{5pt} and \hspace{5pt} $A \rightarrow B \vdash_{\sf DF/TT} A \supset B$
\end{center}
\end{lem}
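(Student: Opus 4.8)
The plan is to build the two derivations directly in the three-sided calculus of \citet{EgreRossiSprenger2020b}, rather than invoking completeness together with Lemma \ref{L:DF/TTcollapse} (which would make the point trivial). The whole argument is powered by the single semantic fact behind that lemma: for $X \in \{A \to B, \, A \supset B\}$ one has $v(X) = 0$ if and only if $v(A) = 1$ and $v(B) = 0$. First I would fix the reading of the sequents. A three-sided sequent $\Gamma \,|\, \Pi \,|\, \Delta$ is valid iff no {\sf DF}-evaluation keeps every formula of $\Gamma$ off the value $0$, every formula of $\Pi$ off $\half$, and every formula of $\Delta$ off $1$; under this reading $\Gamma \vdash_{\sf DF/TT} \Delta$, i.e. the sequent $\Gamma \,|\, \Delta \,|\, \Delta$, is exactly the tolerant-to-tolerant consequence $\Gamma \models_{\sf DF/TT} \Delta$. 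The (generalized) initial sequents are those in which some formula $D$ occurs in all three zones: these are valid because $v(D) \in \{0, \half, 1\}$, so $D$ must hit one of its positions. I only need the rules for the two conditionals on the relevant zones.

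Two observations drive the proof and make the two directions mirror images. \emph{Consequent rule:} since the consequent $X$ occupies both the $\half$-zone and the $1$-zone of a {\sf TT}-sequent, any countermodel must assign it the value $0$; for either conditional $v(X) = 0$ means $v(A) = 1$ and $v(B) = 0$, so this reduces in a single step to forcing $A$ to $1$ (placing $A$ in the $0$- and $\half$-zones) and $B$ to $0$ (placing $B$ in the $\half$- and $1$-zones). \emph{Antecedent rule:} a conditional sitting in the $0$-zone must be $\neq 0$ in any countermodel, i.e. $v(A) \neq 1$ or $v(B) \neq 0$; this is the two-premise step, one branch placing $A$ in the $1$-zone and one placing $B$ in the $0$-zone. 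By the key fact, both rules are \emph{identical} for $\to$ and for $\supset$.

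Concretely, to derive $A \supset B \vdash_{\sf DF/TT} A \to B$, i.e. $(A \supset B) \,|\, (A \to B) \,|\, (A \to B)$, I apply the consequent rule to the two occurrences of $A \to B$, reducing to $(A \supset B),\, A \,|\, A,\, B \,|\, B$; then I decompose the $0$-zone $A \supset B$ by the antecedent rule, obtaining $A \,|\, A,\, B \,|\, B,\, A$ and $A,\, B \,|\, A,\, B \,|\, B$, each of which is an initial sequent ($A$, respectively $B$, occurs in all three zones). The converse $(A \to B) \,|\, (A \supset B) \,|\, (A \supset B)$ runs through the very same steps with $\supset$ and $\to$ interchanged — legitimate precisely because the rules invoked depend only on the shared value-$0$ condition — and closes on the same two identities. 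Since the calculus is sound and complete for {\sf DF/TT}, the remaining clauses of Lemma \ref{L:DF/TTcollapse} (the material and indicative biconditionals) then follow immediately.

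The hard part is bookkeeping around the $\half$-zone. If one refuses the combined consequent rule and instead decomposes the consequent with the primitive single-zone rules, the $\half$-introduction rule for the conditional (or, after unfolding $A \supset B$ as $\neg(A \wedge \neg B)$, for Strong Kleene disjunction) is \emph{branching}, spawning several premises that track every table entry taking value $\half$. I expect the delicate point to be verifying that all of these branches still close; they do, because the companion $1$-zone occurrence of the same formula pins the offending atom onto the one position missing for an identity. Packaging the $\half$- and $1$-occurrences together, as in the consequent rule above, is exactly what collapses this branching and makes both directions short and symmetric — the syntactic shadow of the fact that $\to$ and $\supset$ differ only on the split between $\half$ and $1$, which {\sf TT}-consequence ignores.
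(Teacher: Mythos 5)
Your overall strategy---exhibiting the two derivations directly in the three-sided calculus rather than detouring through completeness---is the same as the paper's. But there is a genuine gap, and it sits exactly where you park it in your last paragraph. The calculus of \citet{EgreRossiSprenger2020b} has no primitive rules for $\supset$: the paper's proof begins by writing $\neg(A \wedge \neg B)$ for $A \supset B$ and must decompose it with the $\neg$- and $\wedge$-rules indexed by the three truth values. Consequently your ``antecedent rule'' and ``consequent rule'' for $\supset$, and the claimed symmetry by which the second direction ``runs through the very same steps with $\supset$ and $\to$ interchanged,'' do not correspond to anything in the calculus. In the paper the two directions are structurally quite different: the direction $A \supset B \vdash A \to B$ closes in a short tree using $\rightarrow$-$\nicefrac{1}{2}$, $\rightarrow$-$1$, $\neg$-$1$, $\wedge$-$1$ and $\neg$-$0$, whereas the converse requires two separate sub-derivations (handling the cases $v(A \wedge \neg B) = \nicefrac{1}{2}$ with and without $B$ forced away from $1$) that are only then combined by the two-premise $\rightarrow$-$0$ rule followed by $\wedge$-$0$, $\neg$-$1$ and $\neg$-$\nicefrac{1}{2}$. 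Verifying that all the branches spawned by the single-zone $\nicefrac{1}{2}$-rules close is not routine bookkeeping to be waved at; it \emph{is} the content of the lemma, which asserts derivability in that specific calculus.

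A secondary worry: your justification for the compound rules is purely semantic (the shared value-$0$ condition of the two conditionals). If those rules are only shown \emph{sound}, then what you have established is the semantic entailment $A \supset B \equiv_{\sf DF/TT} A \to B$ --- i.e.\ the first clause of Lemma \ref{L:DF/TTcollapse} --- and you would need completeness to convert that into $\vdash_{\sf DF/TT}$, which is precisely the route you said you wanted to avoid. To repair the argument you would either have to show your two compound rules are \emph{derivable} (not merely admissible) from the primitive rules, or simply write out the primitive-rule derivations as the paper does.
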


\begin{proof}
We write $\neg (A \wedge \neg B)$ for $A \supset B$, as the two formulae are definitionally equivalent in {\sf DF/TT}. The following derivation establishes that $A \supset B \vdash_{\sf DF/TT} A \rightarrow B$: 

\begin{center}
\AxiomC{$\,$}
\RightLabel{\scriptsize{$\mathsf{SRef}$}}
\UnaryInfC{$A \,|\, A, B \,|\, A \rightarrow B, A$}
\RightLabel{\scriptsize{$\rightarrow$-$\nicefrac{1}{2}$}}
\UnaryInfC{$\varnothing \,|\, A \rightarrow B \,|\, A \rightarrow B, A$}
\AxiomC{$\,$}
\RightLabel{\scriptsize{$\mathsf{SRef}$}}
\UnaryInfC{$A, B \,|\, A, B \,|\, A$}
\AxiomC{$\,$}
\RightLabel{\scriptsize{$\mathsf{SRef}$}}
\UnaryInfC{$A, B \,|\, A, B \,|\, B$}
\RightLabel{\scriptsize{$\rightarrow$-1}}
\BinaryInfC{$A, B \,|\, A, B \,|\, A \rightarrow B$}
\RightLabel{\scriptsize{$\rightarrow$-$\nicefrac{1}{2}$}}
\UnaryInfC{$B \,|\, A \rightarrow B \,|\, A \rightarrow B$}
\RightLabel{\scriptsize{$\neg$-1}}
\UnaryInfC{$\varnothing \,|\, A \rightarrow B \,|\, A \rightarrow B, \neg B$}
\RightLabel{\scriptsize{$\wedge$-1}}
\BinaryInfC{$\varnothing \,|\, A \rightarrow B \,|\, A \rightarrow B, A \wedge \neg B$}
\RightLabel{\scriptsize{$\neg$-$0$}}
\UnaryInfC{$\neg (A \wedge \neg B) \,|\, A \rightarrow B \,|\, A \rightarrow B$}
\DisplayProof
\end{center}

\noindent We now show that $A \rightarrow B \vdash_{\sf DF/TT} A \supset B$. First, let $\mathcal{D}_0$ be the following derivation:

\begin{center}
\AxiomC{$\,$}
\RightLabel{\scriptsize{$\mathsf{SRef}$}}
\UnaryInfC{$A, \neg B\,|\, A \,|\, A, A$}
\AxiomC{$\,$}
\RightLabel{\scriptsize{$\mathsf{SRef}$}}
\UnaryInfC{$A, \neg B\,|\, \neg B \,|\, A, \neg B$}
\AxiomC{$\,$}
\RightLabel{\scriptsize{$\mathsf{SRef}$}}
\UnaryInfC{$A, \neg B\,|\, A, \neg B \,|\, A$}
\RightLabel{\scriptsize{$\wedge$-$\nicefrac{1}{2}$}}
\TrinaryInfC{$A, \neg B\,|\, A \wedge \neg B \,|\, A$}
\DisplayProof
\end{center}

\noindent Second, let $\mathcal{D}_1$ be the following derivation: 

\begin{center}
\AxiomC{$\,$}
\RightLabel{\scriptsize{$\mathsf{SRef}$}}
\UnaryInfC{$A, \neg B, B\,|\, A \,|\, A$}
\AxiomC{$\,$}
\RightLabel{\scriptsize{$\mathsf{SRef}$}}
\UnaryInfC{$A, \neg B, B\,|\, \neg B \,|\, \neg B$}
\AxiomC{$\,$}
\RightLabel{\scriptsize{$\mathsf{SRef}$}}
\UnaryInfC{$A, B\,|\, A,  B \,|\, B$}
\RightLabel{\scriptsize{$\neg$-$\nicefrac{1}{2}$}}
\UnaryInfC{$A, B\,|\, A,  \neg B \,|\, B$}
\RightLabel{\scriptsize{$\neg$-$1$}}
\UnaryInfC{$A, \neg B, B\,|\, A,  \neg B \,|\, \varnothing$}
\RightLabel{\scriptsize{$\wedge$-$\nicefrac{1}{2}$}}
\TrinaryInfC{$A, \neg B, B\,|\, A \wedge \neg B \,|\, \varnothing$}
\DisplayProof
\end{center}

\noindent Finally, combining $\mathcal{D}_0$ and $\mathcal{D}_1$ yields the desired result: 

\begin{center}
\AxiomC{$\mathcal{D}_0$}
\noLine
\UnaryInfC{$A, \neg B\,|\, A \wedge \neg B \,|\, A$}
\AxiomC{$\mathcal{D}_1$}
\noLine
\UnaryInfC{$A, \neg B, B\,|\, A \wedge \neg B \,|\, \varnothing$}
\RightLabel{\scriptsize{$\rightarrow$-$0$}}
\BinaryInfC{$A, \neg B, A \rightarrow B \,|\, A \wedge \neg B \,|\, \varnothing$}
\RightLabel{\scriptsize{$\wedge$-$0$}}
\UnaryInfC{$A \wedge \neg B, A \rightarrow B \,|\, A \wedge \neg B \,|\, \varnothing$}
\RightLabel{\scriptsize{$\neg$-$1$}}
\UnaryInfC{$A \rightarrow B \,|\, A \wedge \neg B \,|\, \neg (A \wedge \neg B)$}
\RightLabel{\scriptsize{$\neg$-$\nicefrac{1}{2}$}}
\UnaryInfC{$A \rightarrow B\,|\, \neg (A \wedge \neg B) \,|\, \neg (A \wedge \neg B)$}
\DisplayProof
\end{center}
\end{proof}

\section{Gibbardian collapse without Left Logical Equivalence}\label{sec:khoo}

\citet[489]{khoo2019triviality} prove Gibbard's collapse result using Reasoning by Cases. They do not use Left Logical Equivalence (as in our reconstruction of Gibbard's original proof) and explicitly refer to principles (i)--(iii) only (i.e., Import-Export, Stronger-than-Material and Supraclassicality). However, like Gibbard, they actually make use of more assumptions, in particular (v): the classicality of $\supset$. Their proof can be formalized thus:

\begin{thm}
Let $L$ be a reflexive, monotonic, and transitive consequence relation, with $\vee$ satisfying Reasoning by Cases. Then if (i), (ii), (iii) and (v) hold in $L$, $\supset$ entails $\to$, that is, for any $A, B \in {\sf For}(L)$, $A \supset B \models_L A \to B$.
\end{thm}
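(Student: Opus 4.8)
The plan is to derive $A \supset B \models_L A \to B$ by Reasoning by Cases on the classically equivalent disjunction $\neg A \vee B$, using this in place of the Left Logical Equivalence step of Gibbard's original argument. First I would record, via the classicality of $\supset$ (v), that $A \supset B \models_L \neg A \vee B$, since this is a classical inference; by transitivity it then suffices to establish $\neg A \vee B \models_L A \to B$. Reasoning by Cases reduces this to the two subgoals $\neg A \models_L A \to B$ and $B \models_L A \to B$, which I would treat in parallel.

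For the first subgoal I would feed the classical inference $\neg A \wedge A \models_{\sf CL} B$ (a contradiction entails everything) into Supraclassicality (iii) to obtain $\models_L (\neg A \wedge A) \to B$, then apply Import-Export (i) in the form $(\neg A \wedge A) \to B \equiv_L \neg A \to (A \to B)$ to get $\models_L \neg A \to (A \to B)$. Stronger-than-Material (ii) together with transitivity yields $\models_L \neg A \supset (A \to B)$, and the metainferential Modus Ponens for $\supset$ licensed by (v) delivers $\neg A \models_L A \to B$. The second subgoal runs symmetrically: Supraclassicality applied to Conjunction Elimination $B \wedge A \models_{\sf CL} B$ gives $\models_L (B \wedge A) \to B$, Import-Export rewrites this as $\models_L B \to (A \to B)$, and (ii) with (v) yields $B \models_L A \to B$. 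Reasoning by Cases then combines the two subgoals into $\neg A \vee B \models_L A \to B$, and transitivity with $A \supset B \models_L \neg A \vee B$ closes the argument.

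The step to watch is the application of Import-Export, and this is exactly where the avoidance of LLE is purchased: I must split off precisely the conjunct on which I am conditioning, so I write the relevant conjunctions as $\neg A \wedge A$ and $B \wedge A$ from the outset, with the case-formula as the \emph{left} conjunct, and invoke Import-Export in the schematic form $(X \wedge Y) \to C \equiv_L X \to (Y \to C)$. No commutativity of $\wedge$ inside a conditional antecedent is ever required, which is the manoeuvre that in Gibbard's proof forced an appeal to Left Logical Equivalence. The remaining ingredients are light: transitivity chains the entailments and passes through Stronger-than-Material, Reasoning by Cases carries the disjunctive combination, and reflexivity and monotonicity serve only their routine bookkeeping roles. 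The one genuine use of (v) beyond metainferential Modus Ponens is the opening reduction of $A \supset B$ to $\neg A \vee B$, which is where the classicality of $\supset$ does its real work.
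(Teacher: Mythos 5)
Your proposal is correct and follows essentially the same route as the paper's own proof in Appendix \ref{sec:khoo}: the same case split on $\neg A \vee B$, the same two subderivations via Supraclassicality, Import-Export, Stronger-than-Material and metainferential Modus Ponens, with the reduction of $A \supset B$ to $\neg A \vee B$ via (v) merely moved from the last step to the first. Your remark about ordering the conjuncts as $\neg A \wedge A$ and $B \wedge A$ so that no appeal to Left Logical Equivalence is needed is exactly the point the paper makes.
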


1. $\neg A \wedge A \models_{\sf CL} B$, classical logic

2. $\models_L (\neg A \wedge A)\to B$, by 1 and (iii)

3. $\models_L \neg A \to (A \to B)$, by 2 and (i)

4. $\neg A \to (A\to B) \models_L \neg A \supset (A \to B)$, by (ii)

5. $\models_L \neg A \supset (A \to B)$, by 3, 4 and Transitivity

6. $\neg A \models_L \neg A \supset (A \to B)$, by 5 and Monotonicity

7. $\neg A \models_L \neg A$, by Reflexivity

8. $\neg A \models_L A\to B$, by 6, 7 and (v), using (meta) Modus Ponens for $\supset$

9. $B \wedge A \models_{\sf CL} B$, classical logic

10. $\models_L (B \wedge A) \to B$, by 9 and (iii)

11. $\models_L B \to (A \to B)$, by 10 and (i)

12. $B \to (A \to B) \models_L B \supset (A \to B)$, by (ii)

13. $\models_L B \supset (A \to B)$, by 11, 12, and Transitivity

14. $B \models_L B \supset (A \to B)$, by 13 and Monotonicity

15. $B \models_L B$ by Reflexivity

16. $B \models_L A\to B$, by 14, 15, (v), using (meta) Modus Ponens

17. $\neg A \vee B\models_L A\to B$, by 8, 16 and Reasoning by Cases

18. $A\supset B\models_L A\to B$, by 17 and (v)\bigskip

This version does not use the replacement principle (iv) of Gibbard's original proof, making it particularly interesting, in particular in relation to {\sf DF/TT}. Indeed, Reasoning by Cases is valid in {\sf DF/TT} and {\sf CC/TT}, as are structural assumptions on logical consequence. We know that {\sf CC/TT} fails Supraclassicality and so step 2 and 10 of the proof are blocked. Interestingly, however, all steps of the proof here are \emph{sound in {\sf DF/TT}}. Although principle (v) does not hold of $\supset$ in full generality in {\sf DF/TT}, all instances of (v) are sound in this case, unlike in Gibbard's original proof. Readers may observe that the proof of $A \supset B \vdash_{\sf DF/TT} A \rightarrow B$ produced in the sequent-system of Appendix \ref{sec:technical} also mirrors Reasoning by Cases (see Lemma \ref{lemma:seq}): on the third line from the root of the tree, the left branch of the derivation tree actually establishes that $\neg A \vdash_{\sf DF/TT} A \rightarrow B$, while the right branch establishes that $B \vdash_{\sf DF/TT} A \rightarrow B$. 


\end{document}